\newtheorem{theorem}{Theorem}
\numberwithin{theorem}{section}
\newtheorem{lemma}[theorem]{Lemma}
\crefname{lemma}{Lemma}{lemma}
   \xpatchcmd{\@thm}{\fontseries\mddefault\upshape}{}{}{} 
\newtheorem{example}[theorem]{Example}
\newtheorem*{q*}{Question}
\newtheorem{assumption}[theorem]{Assumption}
\crefname{assumption}{Assumption}{assumption}
\numberwithin{equation}{section}
\DeclareMathOperator*{\esssup}{ess\,sup}
\def\defto{\buildrel def\over =}
\def\P{\mathbb{P}}
\def\S{\mathcal{S}}
\theoremstyle{remark}
\newtheorem*{remark}{Remark}
\let\@fnsymbol\@arabic
\author{Saul D. Jacka\thanks{Saul D. Jacka gratefully acknowledges funding received from the EPSRC grant EP/P00377X/1 and is also grateful to the Alan Turing Institute for their financial support under the EPSRC 
grant EP/N510129/1. E-mail: \textit{s.d.jacka@warwick.ac.uk}}\hspace{3mm}  and  \hspace{3mm}Dominykas Norgilas\thanks{Dominykas Norgilas gratefully acknowledges funding received from the EPSRC Doctoral Training Partnerships grant EP/M508184/1. E-mail: \textit{d.norgilas@warwick.ac.uk}}\\
Department of Statistics, University of Warwick\\Coventry CV4 7AL, UK}
\title{On the compensator in the Doob-Meyer decomposition of the Snell envelope}
\begin{document}

\maketitle

\begin{abstract}
Let $G$ be a semimartingale, and $S$ its Snell envelope. Under the assumption that $G\in\mathcal{H}^1$, we show that the finite-variation part of $S$ is absolutely continuous with respect to the decreasing part of the finite-variation part of $G$. In the Markovian setting, this enables us to identify sufficient conditions for the value function of the optimal stopping problem to belong to the domain of the extended (martingale) generator of the underlying Markov process. We then show that the \textit{dual} of the optimal stopping problem is a stochastic control problem for a controlled Markov process, and the optimal control is characterised by a function belonging to the domain of the martingale generator. Finally, we give an application to the smooth pasting condition.\\
\indent Keywords: Doob-Meyer decomposition, optimal stopping, Snell envelope stochastic control, martingale duality, smooth pasting.\\
\indent Mathematics Subject Classification: 60G40, 60G44, 60J25, 60G07,  93E20.
\end{abstract}

\section{Introduction}\label{intro}
Given a (gains) process $G=(G_t)_{t\geq0}$, living on the usual filtered probability space $(\Omega, \mathcal{F}, \mathbb{F}=(\mathcal{F}_t)_{t\geq0},\mathbb{P})$, the classical optimal stopping problem is to find a maximal reward $v(0)=\sup_{\tau\geq0}\mathbb{E}[G_\tau]$, where the supremum is taken over all $\mathbb{F}$ - stopping times. In order to compute $v(0)$, we consider, for each $\mathbb{F}$ - stopping time $\sigma\geq0$, the value function $v(\sigma)=\esssup_{\tau\geq\sigma}\mathbb{E}[G_\tau\lvert\mathcal{F}_\sigma]$. It is, or should be, well-known (see, for example, El Karoui \cite{el1981lesaspects}, Karatzas and Shreve \cite{KS1998methods}) that under suitable integrability and regularity conditions on the process $G$,  the Snell envelope of $G$, denoted by $S=(S_t)_{t\geq0}$, is the minimal supermartingale which dominates $G$ and aggregates the value function $v$, so that for any $\mathbb{F}$ - stopping time $\sigma\geq0$, $S_\sigma=v(\sigma)$ almost surely. Moreover, $\tau_\sigma:=\inf\{r\geq \sigma:S_r=G_r\}$ is the minimal optimal stopping time, so, in particular, $S_{\sigma}=v(\sigma)=\mathbb{E}[G_{\tau_\sigma}\lvert\mathcal{F}_\sigma]$ almost surely. A successful construction of the process $S$ leads, therefore, to the solution of the initial optimal stopping problem.

In the Markovian setting the gains process takes the form $G=g(X)$, where $g(\cdot)$ is some payoff function applied to an underlying Markov process $X$. Under very general conditions, the Snell envelope is then characterised as the least super-mean-valued function $V(\cdot)$ that majorizes $g(\cdot)$. A standard technique to find the value function $V(\cdot)$ is to solve the corresponding obstacle (free-boundary) problem. For an exposition of the general theory of optimal stopping in both settings we also refer to Peskir and Shiryaev \cite{peskir2006optimal}.

The main aim of this paper is to answer the following canonical question of interest:
\begin{q*}{When does the value function $V(\cdot)$ belong to the domain of the extended (martingale) generator of the underlying Markov process $X$?}
\end{q*}

Very surprisingly, given how long general optimal stopping problems have been studied (see Snell \cite{snell1952applications}), we have been unable to find any general results about this.

As the title suggests, we tackle the question by considering the optimal stopping problem in a more general (semimartingale) setting first. If a gains process $G$ is sufficiently integrable, then $S$ is of class (D) and thus uniquely decomposes into the difference of a uniformly integrable martingale, say $M$, and a predictable, increasing process, say $A$, of integrable variation. From the general theory of optimal stopping it can be shown that $\bar{\tau}_\sigma:=\inf\{r\geq\sigma:A_r>0\}$ is the maximal optimal stopping time, while the stopped process $S^{\bar{\tau}_\sigma}=(S_{t\wedge\bar{\tau}_\sigma})_{t\geq0}$ is a martingale. Therefore, the finite variation part of $S$, $A$, must be zero up to $\bar{\tau}_\sigma$. Now suppose that $G$ is a semimartingale itself. Then its finite variation part can be further decomposed into the sum of increasing and decreasing processes that are, as random measures, mutually singular. Off the support of the decreasing one, $G$ is (locally) a submartingale, and thus in this case it is suboptimal to stop, and we again expect $S$ to be (locally) a martingale. This also suggests that $A$ increases only if the decreasing component of the finite variation part of $G$ decreases. In particular, we prove the following fundamental result (see \cref{tabscont}):
\begin{equation*}
\parbox{0.8\linewidth}{\textit{the finite-variation process in the Doob-Meyer decomposition of} $S$ \textit{is absolutely continuous with respect to the decreasing part of the corresponding finite-variation process in the decomposition of} $G$.}
\end{equation*}
This being a very natural conjecture, it is not surprising that some variants of it have already been considered. As a helpful referee pointed out to us, several versions of \cref{tabscont} were established in the literature on reflected BSDEs under various assumptions on the gains process, see El Karoui et al. \cite{el1997bsde} ($G$ is a continuous semimartingale), Crep\'{e}y and Matoussi \cite{crepey2008reflected} ($G$ is a c\`{a}dl\`{a}g quasi-martingale), Hamad\'{e}ne and Ouknine \cite{hamadene2003rbsde} ($G$ is a limiting process of a sequence of sufficiently regular semimartingales). We note that these results (except Hamad\'{e}ne and Ouknine \cite{hamadene2003rbsde}, where the assumed regularity of $G$ is exploited) are proved essentially by using (or appropriately extending) the related (but different) result established in Jacka \cite{jacka1993local}. There, under the assumption that $S$ and $G$ are both continuous and sufficiently integrable semimartingales, the author shows that a local time of $S-G$ at zero is absolutely continuous with respect to the decreasing part of the finite-variation process in the decomposition of $G$. Our proof of \cref{tabscont} relies on the classical methods establishing the Doob-Meyer decomposition of a supermartingale.

The first part of \cref{main} is devoted to the groundwork necessary to establish \cref{tabscont}. It turns out that an answer to the motivating question of this paper then follows naturally. In particular, in the second part of \cref{main}, in \cref{pvdom}, we show that, under very general assumptions on the underlying Markov process $X$, if the payoff function $g(\cdot)$ belongs to the domain of the martingale generator of $X$, so does the value function $V(\cdot)$ of the optimal stopping problem.   

In \cref{app} we discuss some applications. First, we consider a dual approach to optimal stopping problems due to Davis and Karatzas \cite{davis1994deterministic} (see also Rogers \cite{rogers2002monte}, and Haugh and Kogan \cite{haugh2004pricing}). In particular, from the absolute continuity result announced above, it follows that the dual is a stochastic control problem \textit{for a controlled Markov process}, which opens the doors to the application of all the available theory related to such problems (see Fleming and Soner \cite{fleming2006controlled}). Secondly, if the value function of the optimal stoping problem belongs to the domain of the martingale generator, under a few additional (but general) assumptions, we also show that the celebrated \textit{smooth fit} principle holds for (killed) one-dimensional diffusions.
\section{Preliminaries}\label{prelim}
\subsection{General framework}\label{prelimmart}
Fix a time horizon $T\in (0,\infty]$. Let $G$ be an adapted, c\`{a}dl\`{a}g gains process on $(\Omega, \mathcal{F},  \mathbb{F}=(\mathcal{F}_t)_{0\leq t\leq T},\mathbb{P})$, where $\mathbb{F}$ is a right-continuous and complete filtration (augmented by the null sets of $\mathcal{F}=\mathcal{F}_T$). We suppose that $\mathcal{F}_0$ is trivial. In the case $T=\infty$, we interpret $\mathcal{F}_\infty=\sigma\Big(\cup_{0\leq t<\infty}\mathcal{F}_t\Big)$ and $G_\infty=\liminf_{t\to\infty}G_t$. For two $\mathbb{F}$-stopping times $\sigma_1$, $\sigma_1$ with $\sigma_1\leq\sigma_2$ $\mathbb{P}$-a.s., by $\mathcal{T}_{\sigma_1,\sigma_2}$ we denote the set of all $\mathbb{F}$-stopping times $\tau$ such that $\mathbb{P}(\sigma_1\leq\tau\leq\sigma_2)=1$. We will assume that the following condition is satisfied:
\begin{equation}\label{sup}
\mathbb{E}\Big[\sup_{0\leq t\leq T}\lvert G_t\lvert\Big]<\infty,
\end{equation}
and let
$$\bar{\mathbb{G}} \text{ be the space of all adapted, c\`{a}dl\`{a}g processes such that \eqref{sup} holds.}
$$

The \textit{optimal stopping problem} is to compute the maximal expected reward
\begin{equation}
v(0):=\sup_{\tau\in\mathcal{T}_{0,T}}\mathbb{E}[G_\tau].
\end{equation}
\begin{remark}
First note that by \eqref{sup}, $\mathbb{E}[G_\tau]<\infty$ for all $\tau\in\mathcal{T}_{0,T}$, and thus $v(0)$ is finite. Moreover, most of the general results regarding optimal stopping problems are proved under the assumption that $G$ is a non-negative (hence the \textit{gains}) process. However, under \eqref{sup}, $N=(N_t)_{0\leq t \leq T}$ given by $N_t=\mathbb{E}[\sup_{0\leq s\leq T}\lvert G_s\lvert\lvert\mathcal{F}_t]$ is a uniformly integrable martingale, while $\hat{G}:=N+G$ defines a non-negative process (even if $G$ is allowed to take negative values). Then
\begin{equation*}
\hat{v}(0):= \sup_{\tau\in\mathcal{T}_{0,T}}\mathbb{E}[N_\tau + G_\tau] = \mathbb{E}\Big[\sup_{0\leq t\leq T}\lvert G_t\lvert\Big] + \sup_{\tau\in\mathcal{T}_{0,T}}\mathbb{E}[ G_\tau],
\end{equation*}
and finding $\hat{v}(0)$ is the same as finding $v(0)$. Hence we may, and shall, assume without loss of generality that $G\geq 0$.
\end{remark}

The key to our study is provided by the family $\{v(\sigma)\}_{\sigma\in\mathcal{T}_{0,T}}$ of random variables
\begin{equation}\label{valuefamily}
v(\sigma):=\esssup_{\tau\in\mathcal{T}_{\sigma,T}}\mathbb{E}[G_\tau\lvert\mathcal{F}_\sigma],\quad\sigma\in\mathcal{T}_{0,T}.
\end{equation}
Note that, since each deterministic time $t\in[0,T]$ is also a stopping time, \eqref{valuefamily} defines an adapted \textit{value} process $(v_t)_{0\leq t\leq T}$ with $v_t=v(t)$. We begin with a fundamental result characterising the so-called Snell envelope process, $S=(S_t)_{0\leq t\leq T}$, of $G$. In particular, $S$ is a version of  $(v_t)_{0\leq t\leq T}$ that aggregates the value function $v(\cdot)$ at each stopping time $\sigma\in\mathcal{T}_{0,T}$ (see Appendix D in Karatzas and Shreve \cite{KS1998methods}). 

\begin{theorem}[Characterisation of $S$]\label{tsnell}
Let $G\in\bar{\mathbb{G}}$. The Snell envelope process $S$ of $G$ satisfies
$S_\sigma=v(\sigma)$ $\mathbb{P}$-a.s., $\sigma\in\mathcal{T}_{0,T}$, and is the minimal c\`{a}dl\`{a}g supermartingale that dominates $G$.
\end{theorem}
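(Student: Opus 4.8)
The plan is to construct a candidate process from the value family in \eqref{valuefamily}, verify it is a supermartingale dominating $G$, regularise it into a c\`{a}dl\`{a}g process that aggregates $v(\cdot)$, and finally argue minimality by optional sampling. The foundational ingredient is the \emph{upward-directedness} of the family $\{\mathbb{E}[G_\tau\mid\mathcal{F}_\sigma]:\tau\in\mathcal{T}_{\sigma,T}\}$. Given $\tau_1,\tau_2\in\mathcal{T}_{\sigma,T}$, set $A=\{\mathbb{E}[G_{\tau_1}\mid\mathcal{F}_\sigma]\geq\mathbb{E}[G_{\tau_2}\mid\mathcal{F}_\sigma]\}\in\mathcal{F}_\sigma$ and $\tau_3=\tau_1\mathbbm{1}_A+\tau_2\mathbbm{1}_{A^c}$; then $\tau_3\in\mathcal{T}_{\sigma,T}$ and $\mathbb{E}[G_{\tau_3}\mid\mathcal{F}_\sigma]=\max(\mathbb{E}[G_{\tau_1}\mid\mathcal{F}_\sigma],\mathbb{E}[G_{\tau_2}\mid\mathcal{F}_\sigma])$. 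A standard consequence is that the essential supremum defining $v(\sigma)$ is attained along an increasing sequence, i.e.\ there exist $\tau_n\in\mathcal{T}_{\sigma,T}$ with $\mathbb{E}[G_{\tau_n}\mid\mathcal{F}_\sigma]\uparrow v(\sigma)$ $\mathbb{P}$-a.s.; the integrability in \eqref{sup} and monotone convergence then give $\mathbb{E}[v(\sigma)]=\sup_{\tau\in\mathcal{T}_{\sigma,T}}\mathbb{E}[G_\tau]<\infty$.

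Next I would establish the supermartingale property at the level of stopping times: for $\sigma_1\leq\sigma_2$ in $\mathcal{T}_{0,T}$ one has $\mathbb{E}[v(\sigma_2)\mid\mathcal{F}_{\sigma_1}]\leq v(\sigma_1)$. Using the approximating sequence $\mathbb{E}[G_{\tau_n}\mid\mathcal{F}_{\sigma_2}]\uparrow v(\sigma_2)$, monotone convergence for conditional expectations yields $\mathbb{E}[v(\sigma_2)\mid\mathcal{F}_{\sigma_1}]=\lim_n\mathbb{E}[G_{\tau_n}\mid\mathcal{F}_{\sigma_1}]$, and since each $\tau_n\in\mathcal{T}_{\sigma_2,T}\subseteq\mathcal{T}_{\sigma_1,T}$, every term is bounded above by $v(\sigma_1)$. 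Taking $\sigma_2=\sigma$ and $\tau=\sigma\in\mathcal{T}_{\sigma,T}$ also gives the domination $v(\sigma)\geq\mathbb{E}[G_\sigma\mid\mathcal{F}_\sigma]=G_\sigma$, so the value process dominates $G$ at every stopping time.

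The delicate step is \emph{aggregation}: passing from the family $\{v(\sigma)\}_{\sigma\in\mathcal{T}_{0,T}}$ to a single c\`{a}dl\`{a}g process $S$ with $S_\sigma=v(\sigma)$ $\mathbb{P}$-a.s. I would first check that the value supermartingale is right-continuous in expectation, i.e.\ $t\mapsto\mathbb{E}[v_t]$ is right-continuous, which follows from the right-continuity of $G$ together with \eqref{sup} and dominated convergence applied along deterministic sequences $t_k\downarrow t$. Given the right-continuity and completeness of $\mathbb{F}$, the classical regularisation theorem for supermartingales then furnishes a c\`{a}dl\`{a}g modification $S$ of $(v_t)_{0\leq t\leq T}$; the identity $S_\sigma=v(\sigma)$ for arbitrary stopping times is recovered by optional sampling applied to the c\`{a}dl\`{a}g supermartingale $S$ and approximation of $\sigma$ by a decreasing sequence of discrete-valued stopping times, invoking right-continuity of $S$ and uniform integrability from \eqref{sup}. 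I expect this regularisation-and-aggregation step to be the main obstacle, since it requires simultaneously controlling the modification at all stopping times rather than merely at deterministic times.

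Finally, minimality: let $Y$ be any c\`{a}dl\`{a}g supermartingale with $Y\geq G$. For $\sigma\in\mathcal{T}_{0,T}$ and any $\tau\in\mathcal{T}_{\sigma,T}$, optional sampling gives $Y_\sigma\geq\mathbb{E}[Y_\tau\mid\mathcal{F}_\sigma]\geq\mathbb{E}[G_\tau\mid\mathcal{F}_\sigma]$, and taking the essential supremum over $\tau$ yields $Y_\sigma\geq v(\sigma)=S_\sigma$. Hence $S$ is the minimal such supermartingale, completing the proof.
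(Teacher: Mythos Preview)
Your proposal is correct and follows the standard route: upward-directedness of the conditional-expectation family, the supermartingale property via conditional monotone convergence, Doob regularisation after checking right-continuity of $t\mapsto\mathbb{E}[v_t]$, and minimality via optional sampling. The paper does not give its own proof of \cref{tsnell}; it simply refers the reader to Appendix~I of Dellacherie--Meyer \cite{delmey} and Proposition~2.26 of El~Karoui \cite{el1981lesaspects}, whose arguments are essentially the one you sketch (the directedness argument you write out is precisely the paper's \cref{lem:directed} in the appendix).

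One small caution on the regularisation step: the phrase ``dominated convergence applied along deterministic sequences $t_k\downarrow t$'' is too quick for the lower bound. The supermartingale property already gives $\mathbb{E}[v_{t_k}]\leq\mathbb{E}[v_t]$; for the reverse inequality one fixes an arbitrary $\tau\in\mathcal{T}_{t,T}$, sets $\tau_k=\tau\vee t_k\in\mathcal{T}_{t_k,T}$, and uses right-continuity of $G$ together with \eqref{sup} to obtain $\mathbb{E}[G_\tau]=\lim_k\mathbb{E}[G_{\tau_k}]\leq\liminf_k\mathbb{E}[v_{t_k}]$, whence $\mathbb{E}[v_t]=\sup_\tau\mathbb{E}[G_\tau]\leq\liminf_k\mathbb{E}[v_{t_k}]$. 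With this refinement the architecture is sound, and your explicit acknowledgement that aggregation at stopping times is the delicate point is well placed.
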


For the proof of Theorem \ref{tsnell} under slightly more general assumptions on the gains process $G$ consult Appendix I in Dellacherie and Meyer \cite{delmey} or Proposition 2.26 in El Karoui \cite{el1981lesaspects}. 

 If $G\in\bar{\mathbb{G}}$, it is clear that $G$ is a uniformly integrable process. In particular, it is also of class (D), i.e. the family of random variables $\{G_\tau\mathbbm{1}_{\{\tau<\infty\}}:\tau \textrm{ is a stopping time}\}$ is uniformly integrable. On the other hand, a right-continuous adapted process $Z$ belongs to the class (D) if there exists a uniformly integrable martingale $\hat{N}$, such that, for all $t\in[0,T]$, $\lvert Z_t\lvert\leq \hat{N}_t$ $\mathbb{P}$-a.s. (see e.g. Dellacherie and Meyer \cite{delmey}, Appendix I and references therein). In our case, by the definition of $S$ and using the conditional version of Jensen's inequality, for $t\in[0,T]$, we have
\begin{equation*}
\lvert S_t\lvert\leq\mathbb{E}\Big[\sup_{0\leq s\leq T}\lvert G_s\lvert\Big\lvert\mathcal{F}_t\Big]:=N_t
\quad\mathbb{P}\textrm{-a.s.}
\end{equation*}
But, since $G\in\bar{\mathbb{G}}$, ${N}$ is a uniformly integrable martingale, which proves the following\begin{lemma}\label{lcondition}
Suppose $G\in\bar{\mathbb{G}}$. Then $S$ is of class (D).
\end{lemma}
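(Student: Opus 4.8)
The plan is to exhibit a single uniformly integrable martingale dominating $|S|$ and then to invoke the sufficient criterion for class (D) recalled immediately above the statement, namely that a right-continuous adapted process dominated in absolute value by a uniformly integrable martingale is of class (D). The natural candidate is the Doob martingale $N_t:=\mathbb{E}[\sup_{0\leq s\leq T}|G_s|\mid\mathcal{F}_t]$, which is well defined and integrable precisely because $G\in\bar{\mathbb{G}}$ forces $\sup_{0\leq s\leq T}|G_s|\in L^1$ through \eqref{sup}.

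First I would establish the pointwise domination $|S_t|\leq N_t$ for every $t\in[0,T]$. Using the aggregation property $S_t=v(t)=\esssup_{\tau\in\mathcal{T}_{t,T}}\mathbb{E}[G_\tau\mid\mathcal{F}_t]$ supplied by \cref{tsnell}, I bound each candidate $\mathbb{E}[G_\tau\mid\mathcal{F}_t]$ above by $\mathbb{E}[|G_\tau|\mid\mathcal{F}_t]$, and then, by monotonicity of conditional expectation applied to $|G_\tau|\leq\sup_{0\leq s\leq T}|G_s|$, by $N_t$. Taking the essential supremum over $\tau\in\mathcal{T}_{t,T}$ preserves the inequality, so $S_t\leq N_t$; since we operate under the normalisation $G\geq 0$, and hence $S\geq 0$, this is exactly $|S_t|\leq N_t$. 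That $N$ is a uniformly integrable martingale is then immediate, being the right-continuous version of the conditional expectation of a fixed integrable random variable, i.e. a closed (Doob) martingale.

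With a uniformly integrable martingale $N$ satisfying $|S_t|\leq N_t$ for all $t$ in hand, the quoted criterion delivers the conclusion directly. I anticipate no substantive obstacle: the argument is routine, and the only point deserving a word of care is the transition from domination at deterministic times to the class (D) property, which is subsumed by the quoted criterion (relying on the right-continuity of $S$ and $N$) rather than requiring a separate optional-sampling estimate over stopping times.
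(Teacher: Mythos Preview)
Your proposal is correct and follows essentially the same approach as the paper: define $N_t=\mathbb{E}[\sup_{0\leq s\leq T}|G_s|\mid\mathcal{F}_t]$, show $|S_t|\leq N_t$ via the characterisation $S_t=\esssup_{\tau}\mathbb{E}[G_\tau\mid\mathcal{F}_t]$ and monotonicity of conditional expectation, and then invoke the criterion that domination by a uniformly integrable martingale gives class~(D). The paper phrases the pointwise bound as an application of the conditional Jensen inequality, but the content is identical to your argument.
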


Let $\mathcal{M}_{0}$ denote the set of right-continuous martingales started at zero. Let $\mathcal{M}_{0,loc}$ and $\mathcal{M}_{0,UI}$ denote the spaces of local and uniformly integrable martingales (started at zero), respectively. Similarly, the adapted processes of finite and integrable variation will be denoted by $FV$ and $IV$, respectively. 

It is well-known that a right-continuous (local) supermartingale $P$ has a unique decomposition $P=B-I$ where $B\in\mathcal{M}_{0,loc}$ and $I$ is an increasing ($FV$) process which is predictable. This can be regarded as the general Doob-Meyer decomposition of a supermartingale. Specialising to class (D) supermartingales we have a stronger result (this is a consequence of, for example, Protter \cite{protter2005stochastic} Theorem 16, p.116 and Theorem 11, p.112): 
\begin{theorem}[Doob-Meyer decomposition]\label{DB}
Let $G\in\bar{\mathbb{G}}$. Then the Snell envelope process $S$ admits a unique decomposition
\begin{equation}\label{DM}
S=M^*-A,
\end{equation}
where $M^*\in\mathcal{M}_{0,UI}$, and $A$ is a predictable, increasing $IV$ process.
\end{theorem}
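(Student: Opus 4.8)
The plan is to deduce \cref{DB} from two standard structural facts already in play: that $S$ is a c\`{a}dl\`{a}g supermartingale (\cref{tsnell}) and that it is of class (D) (\cref{lcondition}). The key idea is that the class (D) hypothesis upgrades the \emph{local} Doob-Meyer decomposition—which holds for any right-continuous supermartingale and yields $S = S_0 + B - I$ with $B$ a local martingale started at zero and $I$ predictable increasing—into a \emph{global} one with genuine uniform integrability and integrable variation. I would set $M^* := B$ (so that $M^*\in\mathcal{M}_{0,loc}$ and starts at zero) and $A := I + \text{const}$ absorbing the initial value, and then argue that the class (D) property forces $M^*\in\mathcal{M}_{0,UI}$ and $A\in IV$.

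First I would invoke the general Doob-Meyer decomposition for right-continuous supermartingales of class (D), in the form cited (Protter, Theorem~16, p.116 together with Theorem~11, p.112): a c\`{a}dl\`{a}g supermartingale $P$ of class (D) admits a \emph{unique} decomposition $P = P_0 + N - A$ where $N$ is a uniformly integrable martingale with $N_0=0$ and $A$ is a predictable increasing process of integrable variation, i.e. $\mathbb{E}[A_T]<\infty$. This is precisely the strong form of the theorem, and it applies verbatim once we know $S$ is a c\`{a}dl\`{a}g supermartingale of class (D). The c\`{a}dl\`{a}g supermartingale property is supplied by \cref{tsnell}, and the class (D) property by \cref{lcondition}; so the bulk of the work is really just confirming that the hypotheses of the cited theorem are met and then renaming terms so that the stated form \eqref{DM} emerges, absorbing $S_0$ (a constant, since $\mathcal{F}_0$ is trivial) into either the initial value or the definition of the processes as convenient.

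The main obstacle, such as it is, lies entirely in the uniqueness claim rather than existence. Existence follows immediately from the cited results; for uniqueness I would argue that if $S = M^* - A = \tilde{M}^* - \tilde{A}$ were two decompositions of the required type, then $M^* - \tilde{M}^* = A - \tilde{A}$ would be a uniformly integrable martingale started at zero that is also a \emph{predictable} process of finite variation. A predictable finite-variation martingale started at zero is indistinguishable from zero—this is the standard rigidity fact underlying Doob-Meyer uniqueness (a predictable local martingale of finite variation is constant)—whence $M^*=\tilde{M}^*$ and $A=\tilde{A}$ up to indistinguishability. I expect no genuine difficulty here, since both the existence of the strong decomposition and the predictability-plus-finite-variation uniqueness lemma are classical and are exactly what the cited references package together; the proof is therefore essentially a citation assembled from \cref{tsnell}, \cref{lcondition}, and Protter's theorems, with the only care being the correct handling of $S_0$ and the verification that ``integrable variation'' (rather than merely locally integrable variation) is guaranteed by class (D).
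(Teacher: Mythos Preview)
Your proposal is correct and matches the paper's approach exactly: the paper simply states that \cref{DB} is a consequence of Protter's Theorem~16 (p.~116) and Theorem~11 (p.~112), which is precisely the invocation you describe after verifying via \cref{tsnell} and \cref{lcondition} that $S$ is a c\`{a}dl\`{a}g class~(D) supermartingale. Your handling of $S_0$ by absorbing it into $A$ is also consistent with the paper's explicit remark that $A$ is \emph{not} assumed to start at zero.
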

\begin{remark}
It is normal to assume that the process $A$ in the Doob-Meyer decomposition of $S$ is started at zero. The duality result alluded to in the introduction is one reason why we do not do so here.
\end{remark}

An immediate consequence of \cref{DB} is that $S$ is a semimartingale. In addition, we also assume that $G$ is a semimartingale with the following decomposition: 
\begin{equation}\label{dec}
G=N+D,
\end{equation}
where $N\in\mathcal{M}_{0,loc}$ and $D$ is a $FV$ process. Unfortunately, the decomposition \eqref{dec}  is not, in general, unique. On the other hand, uniqueness {\em is} obtained by requiring the $FV$ term to also be predictable, at the cost of restricting only to locally integrable processes. If there exists a decomposition of a semimartingale $X$ with a predictable $FV$ process, then we say that $X$ is $special$. For a special semimartingale we always choose to work with its $canonical$ decomposition (so that a $FV$ process is predictable). Let $$\mathbb{G} \text{ be the space of semimartingales in }\bar{\mathbb{G}}.
$$ 
\begin{lemma}\label{lunique}
Suppose $G\in\mathbb{G}$. Then $G$ is a special semimartingale.
\end{lemma}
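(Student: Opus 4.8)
The plan is to invoke the standard characterisation of special semimartingales in terms of the integrability of their jumps, and then to observe that membership in $\bar{\mathbb{G}}$ renders the required integrability completely trivial. Recall that a semimartingale $X$ is special whenever its maximal-jump process $\check{X}_t := \sup_{0\le s\le t}\lvert\Delta X_s\rvert$ is locally integrable; this specialises to the classical fact that a semimartingale with bounded jumps is special, and the general statement follows by a routine localisation (see, e.g., \cite{protter2005stochastic} or \cite{delmey}). It therefore suffices to verify that $\check{G}$ is locally integrable.

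First I would bound the jumps of $G$ by its running supremum. For every $s\in[0,T]$,
\begin{equation*}
\lvert\Delta G_s\rvert = \lvert G_s - G_{s-}\rvert \le \lvert G_s\rvert + \lvert G_{s-}\rvert \le 2\sup_{0\le t\le T}\lvert G_t\rvert,
\end{equation*}
so that $\check{G}_T = \sup_{0\le s\le T}\lvert\Delta G_s\rvert \le 2\sup_{0\le t\le T}\lvert G_t\rvert$. Since $G\in\bar{\mathbb{G}}$, condition \eqref{sup} gives $\mathbb{E}\big[\sup_{0\le t\le T}\lvert G_t\rvert\big]<\infty$, whence $\check{G}_T$ is integrable; being an increasing process that is integrable at the terminal time, $\check{G}$ is a fortiori locally integrable. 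By the criterion above, $G$ is special.

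Specialness then yields exactly the desired canonical decomposition: $G$ admits a decomposition $G=N+D$ with $N\in\mathcal{M}_{0,loc}$ and $D$ a predictable $FV$ process, and this decomposition is unique. The only genuine content of the argument is the choice of characterisation of specialness; once the maximal-jump criterion is in hand, the hypothesis $G\in\bar{\mathbb{G}}$ trivialises the integrability requirement, so I do not expect any real obstacle. The one point deserving minor care is that the bound above is uniform over the entire horizon, so the argument covers the case $T=\infty$ as well, where \eqref{sup} still controls all the jumps of $G$ simultaneously.
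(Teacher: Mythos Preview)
Your proof is correct and takes essentially the same approach as the paper, which simply cites Theorems~36 and~37 of Protter \cite{protter2005stochastic}: the latter states that a semimartingale $X$ is special if and only if $X^*_t=\sup_{s\le t}|X_s|$ is locally integrable, so the hypothesis $G\in\bar{\mathbb{G}}$ gives specialness immediately. You route through the (equivalent) maximal-jump criterion rather than the running-supremum criterion, but since you bound $\check{G}$ by $2G^*$ anyway the two arguments are effectively identical.
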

See Theorems 36 and 37 (p.132) in Protter \cite{protter2005stochastic} for the proof.

The following lemma provides a further decomposition of a semimartingale (see Proposition 3.3 (p.27) in Jacod and Shiryaev \cite{jacod2013limit}). In particular, the $FV$ term of a special semimartingale can be uniquely (up to initial values) decomposed in a  predictable way, into the difference of two increasing, mutually singular $FV$ processes. 
\begin{lemma}\label{ldpredict}
Suppose that $K$ is a c\`{a}dl\`{a}g, adapted process such that $K\in FV$. Then there exists a unique pair $(K^+,K^-)$ of adapted increasing processes such that $K-K_0=K^+-K^-$ and $\int\lvert dK_s\lvert=K^++K^-$. Moreover, if $K$ is predictable, then $K^+$, $K^-$ and $\int\lvert dK_s\lvert$ are also predictable.
\end{lemma}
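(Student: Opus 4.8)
The plan is to realise $(K^+,K^-)$ as the Hahn--Jordan decomposition of the (pathwise) signed measure $dK$, reading off every required property from the total-variation process. Write $V_t:=\int_0^t|dK_s|$ for the pathwise total variation of $K$ on $[0,t]$. Since each path of $K$ is c\`{a}dl\`{a}g of finite variation, $V$ is a well-defined c\`{a}dl\`{a}g, increasing process, and it is adapted because $V_t=\sup_\pi\sum_i|K_{t_{i+1}}-K_{t_i}|$ may, by right-continuity of $K$, be computed as a countable supremum over partitions $\pi$ of $[0,t]$ with rational nodes (together with the endpoint $t$), each summand being $\mathcal{F}_t$-measurable.

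For existence I would set
$$K^+:=\tfrac12\big(V+(K-K_0)\big),\qquad K^-:=\tfrac12\big(V-(K-K_0)\big).$$
Both are adapted and start at $0$. They are increasing: for $s\le t$ the increment $V_t-V_s$ is the variation of $K$ on $(s,t]$, hence dominates both $\pm(K_t-K_s)$, so $K^\pm_t-K^\pm_s\ge0$. By construction $K^+-K^-=K-K_0$ and $K^++K^-=V=\int|dK_s|$, which are precisely the two required identities. Uniqueness is then immediate: the total-variation process $\int|dK_s|$ is intrinsic to $K$, so any admissible pair must solve the linear system $K^+-K^-=K-K_0$, $K^++K^-=V$, whose unique solution is the pair displayed above.

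It remains to prove the ``moreover'': if $K$ is predictable then so are $V$, $K^+$ and $K^-$. Since $K-K_0$ is predictable (the constant process $K_0$ is $\mathcal{F}_0$-measurable and left-continuous, hence predictable), it suffices to show that $V$ is predictable. Here I would approximate $V$ by the dyadic variation processes
$$V^{(n)}_t:=\sum_{k\ge0}\big|K_{t\wedge(k+1)2^{-n}}-K_{t\wedge k2^{-n}}\big|.$$
Each stopped process $t\mapsto K_{t\wedge a}$ with $a$ deterministic is predictable --- it equals $K_t\mathbbm{1}_{[0,a]}(t)+K_a\mathbbm{1}_{(a,T]}(t)$, a sum of a predictable process times a deterministic indicator and an $\mathcal{F}_a$-measurable variable times a left-continuous adapted indicator --- so each $V^{(n)}$, being a countable sum of absolute values of differences of such processes, is predictable. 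As the dyadic grids refine with $n$, the sequence $V^{(n)}_t$ is nondecreasing in $n$, and by right-continuity of $K$ the supremum over dyadic partitions recovers the full total variation, giving $V^{(n)}_t\uparrow V_t$ pathwise. A pointwise increasing limit of predictable processes is predictable, so $V$ is predictable, and hence so are $K^\pm$.

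The routine parts are the verification of the two defining identities and the monotonicity of $K^\pm$. The step I expect to require the most care is the predictability of $V$: one must confirm both that the approximants $V^{(n)}$ are genuinely predictable (which hinges on stopping at \emph{deterministic} times preserving predictability) and that the dyadic variation sums increase to the true total variation of the c\`{a}dl\`{a}g path --- the latter needing a short argument, via right-continuity, that jumps occurring at non-dyadic times are nonetheless captured in the limit.
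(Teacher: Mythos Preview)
Your argument is correct. The paper does not prove this lemma at all: it simply records the statement and refers the reader to Proposition~3.3 (p.~27) of Jacod and Shiryaev \cite{jacod2013limit}. What you have written is precisely the standard route taken there --- define $K^\pm$ via the pathwise Hahn--Jordan decomposition $K^\pm=\tfrac12(V\pm(K-K_0))$, and recover predictability of $V$ as a monotone limit of predictable dyadic variation sums. So there is nothing to compare beyond noting that you have supplied the details the paper omits.

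Two small remarks on presentation. First, your justification that $K_a\mathbbm{1}_{(a,T]}$ is predictable is right but the phrasing is slightly awkward: the cleanest way to say it is that this process is left-continuous (zero on $[0,a]$, equal to $K_a$ on $(a,T]$) and adapted (since $K_a\in\mathcal{F}_a\subset\mathcal{F}_t$ for $t>a$), hence predictable. Second, the convergence $V^{(n)}_t\uparrow V_t$ that you flag as the delicate point is the standard fact that for a \emph{regulated} (in particular c\`adl\`ag) function of bounded variation the variation sums over any sequence of partitions with mesh tending to zero converge to the total variation; citing this explicitly would close the argument.
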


\subsection{Markovian setting}\label{prelimmark}
\paragraph{The Markov process}
Let $(E,\mathcal{E})$ be a metrizable Lusin space endowed with the $\sigma$-field of Borel subsets of $E$. Let $X=(\Omega, \mathcal{G},\mathcal{G}_t,X_t,\theta_t,\mathbb{P}_x:x\in E,t\in\mathbb{R}_+)$ be a Markov process taking values in $(E,\mathcal{E})$. We assume that a sample space $\Omega$ is such that the usual semi-group of shift operators $(\theta_t)_{t\geq0}$ is well-defined (which is the case, for example, if $\Omega=E^{[0,\infty)}$ is the canonical path space).  If the corresponding semigroup of $X$, $(P_t)$, is the primary object of study, then we say that $X$ is a realisation of a Markov semigroup $(P_t)$. In the case of $(P_t)$ being sub-Markovian, i.e. $P_t1_E\leq1_E$, we extend it to a Markovian semigroup over $E^\Delta=E\cup\{\Delta\}$, where $\Delta$ is a coffin-state. We also denote by $\mathcal{C}(X)=(\Omega, \mathcal{F},\mathcal{F}_t,X_t,\theta_t,\mathbb{P}_x:x\in E,t\in\mathbb{R}_+)$ the \textit{canonical realisation} associated with $X$, defined on $\Omega$ with the filtration $(\mathcal{F}_t)$ deduced from $\mathcal{F}^0_t=\sigma(X_s:s\leq t)$ by standard regularisation procedures (completeness and right-continuity). 

In this paper our standing assumption is that the underlying Markov process $X$ is a \textit{right process} (consult Getoor~\cite{getoor2006markov}, Sharpe~\cite{sharpe1988general} for the general theory). Essentially, right processes are the processes satisfying Meyer's regularity hypotheses (\textit{hypoth\`eses droites}) HD1 and HD2. If a given Markov semigroup $(P_t)$ satisfies HD1 and $\mu$ is an arbitrary probability measure on $(E,\mathcal{E})$, then there exists a homogeneous $E$-valued Markov process $X$ with transition semigroup $(P_t)$ and initial law $\mu$.  Moreover, a realisation of such $(P_t)$ is right-continuous (Sharpe~\cite{sharpe1988general}, Theorem 2.7). Under the second fundamental hypothesis, HD2, $t\to f(X_t)$ is right-continuous for every $\alpha$-excessive function $f$. Recall, for $\alpha>0$, a universally measurable function $f:E\to\mathbb{R}$ is $\alpha$-super-median if $e^{-\alpha t}P_tf\leq f$ for all $t\geq0$, and $\alpha$-excessive if it is $\alpha$-super-median and $e^{-\alpha t}P_tf\to f$ as $t\to0$. If $(P_t)$ satisfies HD1 and HD2 then the corresponding realisation $X$ is strong Markov (Getoor~\cite{getoor2006markov}, Theorem 9.4 and Blumenthal and Getoor~\cite{blumenthal2007markov}, Theorem 8.11).

\begin{remark}
One has the following inclusions among classes of Markov processes:
\begin{equation*}
\text{(Feller)}\subset\text{(Hunt)}\subset\text{(right)}
\end{equation*}
\end{remark}

Let $\mathcal{L}$ be a given extended infinitesimal (martingale) generator of $X$ with a domain $\mathbb{D}(\mathcal{L})$, i.e. we say a Borel function $f:E\to\mathbb{R}$ belongs to $\mathbb{D}(\mathcal{L})$ if there exists a Borel function $h:E\to\mathbb{R}$, such that $\int^t_0\lvert h(X_s)\lvert ds<\infty$, $\forall t\geq0$, $\P_x$-a.s. for each $x$ and the process $M^f=(M^f_t)_{t\geq0}$, given by
\begin{equation}\label{gen}
M^f_t:=f(X_t)-f(x)-\int^t_0h(X_s)ds,\quad t\geq0,\text{ }x\in E,
\end{equation}
is a local martingale under each $\P_x$ (see Revuz and Yor \cite{revuz2013continuous} p.285), and then we write $h=\mathcal{L}f$. 
\begin{remark}Note that if $A\in \mathcal{E}$ and $\P_x(\lambda(\{t:\; X_t\in A\}=0)=1$ for each $x\in E$, where $\lambda$ is Lebesgue measure, then $h$ may be altered on $A$ without affecting the validity of \eqref{gen}, so that, in general, the map $f\to h$ is not unique. This is why we refer to {\em a} martingale generator.
\end{remark}

\paragraph{Optimal stopping problem}
Let $X=(\Omega, \mathcal{G},\mathcal{G}_t,X_t,\theta_t,\mathbb{P}_x:x\in E,t\in\mathbb{R}_+)$ be a right process. Given a function $g:E\to\mathbb{R}$, $\alpha\geq0$ and $T\in\mathbb{R}_+\cup\{\infty\}$ define a corresponding gains process $G^\alpha$ (we simply write $G$ if $\alpha=0$) by $G^\alpha_t=e^{-\alpha t}g(X_t)$ for $t\in[0,T]$. In the case of $T=\infty$, we make the following conventions:
$$
X_{\infty}=\Delta,\quad G^\alpha_\infty=\liminf_{t\to\infty}G^\alpha_t,\quad g(\Delta)=G^0_\infty.
$$
Let $\mathcal{E}^e,\mathcal{E}^u$ be the $\sigma$-algebras on $E$ generated by excessive functions and universally measurable sets, respectively (recall that $\mathcal{E}\subset\mathcal{E}^e\subset\mathcal{E}^u$). We write 
$$g\in\mathcal{Y} \text{, given that } g(\cdot) \text{ is } \mathcal{E}^e \text{-measurable and } G^\alpha \text{ is of class (D).}
$$
For a filtration $(\hat{\mathcal{G}}_t)$, and $(\hat{\mathcal{G}}_t)$ - stopping times $\sigma_1$ and $\sigma_2$, with $\mathbb{P}_x[0\leq \sigma_1\leq \sigma_2\leq T]=1$, $x\in E$,  let $\mathcal{T}_{\sigma_1,\sigma_2}(\hat{\mathcal{G}})$ be the set of $(\hat{\mathcal{G}}_t)$ - stopping times $\tau$ with $\mathbb{P}_x[\sigma_1\leq\tau\leq \sigma_2]=1$. Consider the following optimal stopping problem:
\begin{equation*}
V(x)=\sup_{\tau\in\mathcal{T}_{0,T}(\mathcal{G})}\mathbb{E}_x[e^{-\alpha\tau}g(X_\tau)],\quad x\in E.
\end{equation*}
By convention we set $V(\Delta)=G^\alpha_\infty$. The following result is due to El Karoui et al.~\cite{el1992probabilistic}.
\begin{theorem}\label{thm:VisS}
Let $X=(\Omega, \mathcal{G},\mathcal{G}_t,X_t,\theta_t,\mathbb{P}_x:x\in E,t\in\mathbb{R}_+)$ be a right process with canonical filtration $(\mathcal{F}_t)$. If $g\in\mathcal{Y}$, then
\begin{equation*}
V(x)=\sup_{\tau\in\mathcal{T}_{0,T}(\mathcal{F})}\mathbb{E}_x[e^{-\alpha\tau}g(X_\tau)],\quad x\in E,
\end{equation*}
and $(e^{-\alpha t}V(X_t))$ is a Snell envelope of $G^\alpha$, i.e. for all $x\in E$ and $ \tau\in\mathcal{T}_{0,T}(\mathcal{F})$
\begin{equation*}
e^{-\alpha\tau}V(X_\tau)=\esssup_{\sigma\in\mathcal{T}_{\tau,T}(\mathcal{F})}\mathbb{E}_x[G^\alpha_\sigma\lvert\mathcal{F}_\tau]\quad\mathbb{P}_x\textrm{-a.s.}
\end{equation*}
\end{theorem}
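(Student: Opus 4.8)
The plan is to transfer the abstract optimal-stopping machinery from the semimartingale setting (\cref{tsnell}, \cref{DB}) to the Markovian one, exploiting the time-homogeneity and the strong Markov property. The key object is the value function $V(\cdot)$, and the two assertions to prove are: (i) that the supremum over the larger family $\mathcal{T}_{0,T}(\mathcal{G})$ coincides with the supremum over the smaller family $\mathcal{T}_{0,T}(\mathcal{F})$; and (ii) that the process $(e^{-\alpha t}V(X_t))$ is a version of the Snell envelope of $G^\alpha$.

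For part (i), I would first argue that $V$ is $\mathcal{E}^e$-measurable (equivalently nearly Borel), so that $V(X_\cdot)$ is adapted to the canonical filtration $(\mathcal{F}_t)$; this is where the hypothesis $g\in\mathcal{Y}$ (i.e. $g$ is $\mathcal{E}^e$-measurable and $G^\alpha$ of class (D)) does its work, since $V$ inherits excessivity/near-Borel measurability from $g$ through the lattice of super-mean-valued majorants. Granting this, the inequality $\sup_{\mathcal{T}(\mathcal{F})}\le\sup_{\mathcal{T}(\mathcal{G})}$ is immediate from $\mathcal{F}_t\subset\mathcal{G}_t$, and the reverse inequality follows from the fact that the optimal-stopping value is unchanged under filtration enlargement for a Markov process: any $(\mathcal{G}_t)$-stopping time can be matched, in value, by an $(\mathcal{F}_t)$-stopping time because the conditional law of the future given $\mathcal{G}_t$ depends only on $X_t$ by the Markov property. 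Concretely one uses the characterisation of $V$ as the smallest $\alpha$-excessive majorant of $g$, which is a property of the semigroup $(P_t)$ alone and hence insensitive to which realisation/filtration one chooses.

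For part (ii), the strategy is to verify the defining property of the Snell envelope directly. By the time-homogeneous Markov property and the shift operators $\theta_t$, for any $\tau\in\mathcal{T}_{0,T}(\mathcal{F})$ one has $\mathbb{E}_x[G^\alpha_\sigma\mid\mathcal{F}_\tau]=e^{-\alpha\tau}\,\mathbb{E}_{X_\tau}[e^{-\alpha(\sigma-\tau)}g(X_{\sigma-\tau})]$ for $\sigma=\tau+(\sigma\circ\theta_\tau)$, so taking the essential supremum over $\sigma\in\mathcal{T}_{\tau,T}(\mathcal{F})$ reduces, via the strong Markov property, to the value function started afresh at $X_\tau$:
\begin{equation*}
\esssup_{\sigma\in\mathcal{T}_{\tau,T}(\mathcal{F})}\mathbb{E}_x[G^\alpha_\sigma\mid\mathcal{F}_\tau]=e^{-\alpha\tau}\,V(X_\tau)\quad\mathbb{P}_x\textrm{-a.s.}
\end{equation*}
This is exactly the aggregation property $S_\tau=v(\tau)$ guaranteed by \cref{tsnell} once we identify $V(X_\cdot)e^{-\alpha\cdot}$ with the abstract Snell envelope $S$ of $G^\alpha$. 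To make that identification rigorous I would invoke \cref{tsnell}: the class (D) assumption on $G^\alpha$ (part of $g\in\mathcal{Y}$) ensures a well-defined Snell envelope $S$, and it remains to check $S_t=e^{-\alpha t}V(X_t)$, which amounts to showing $e^{-\alpha\cdot}V(X_\cdot)$ is the minimal càdlàg supermartingale dominating $G^\alpha$ — supermartingality following from the $\alpha$-excessivity of $V$ and the right-continuity of $t\mapsto V(X_t)$ guaranteed by hypothesis HD2 for right processes.

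The main obstacle I expect is the measurability/regularity bookkeeping rather than any deep inequality: one must confirm that $V$ is genuinely $\mathcal{E}^e$-measurable (so that $V(X)$ is adapted and right-continuous for a right process) and that the essential supremum defining $V$ is attained in a way compatible with the strong Markov property and the shift semigroup $(\theta_t)$. These are precisely the points for which the right-process framework, and in particular the HD1/HD2 hypotheses together with the excessive-function measurability $\mathcal{E}\subset\mathcal{E}^e\subset\mathcal{E}^u$, were set up in the preliminaries; the substantive content is largely contained in the cited result of El Karoui et al.~\cite{el1992probabilistic}, and my proof would essentially organise these regularity facts and then quote \cref{tsnell} to obtain the Snell-envelope characterisation.
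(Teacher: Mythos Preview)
The paper does not supply its own proof of this theorem: it is stated as a result ``due to El Karoui et al.~\cite{el1992probabilistic}'' and left without argument. There is therefore no in-paper proof against which to compare your proposal. Your sketch is a reasonable outline of the standard route (strong Markov plus shift operators to reduce the conditional problem to a fresh start at $X_\tau$, $\alpha$-excessivity of $V$ for the supermartingale property, HD2 for right-continuity of $t\mapsto V(X_t)$, and minimality via \cref{tsnell}); indeed you already acknowledge in your final paragraph that the substantive content lies in the cited reference. If anything, the honest summary of the paper's ``proof'' is simply the citation, and your proposal is an expansion of what that citation contains rather than an alternative to anything the authors wrote.
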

The first important consequence of the theorem is that we can (and will) work with the canonical realisation $\mathcal{C}(X)$. The second one provides a crucial link between the Snell envelope process in the general setting and the value function in the Markovian framework. 
\begin{remark}
The restriction to gains processes of the form $G=g(X)$ (or $G^\alpha$ if $\alpha>0$) is {\em much less restrictive than might appear}. Given that we work on the canonical path space with $\theta$ being the usual shift operator, we can expand the state-space of $X$ by appending an adapted functional $F$, taking values in the space $(E',\mathcal{E}')$, with the property that
\begin{equation}\label{func}
\{F_{t+s}\in A\}\in\sigma(F_s)\cup\sigma(\theta_s\circ X_u:\textit{ }0\leq u\leq t),\quad\textrm{for all }A\in\mathcal{E}'.
\end{equation}
This allows us to deal with time-dependent problems, running rewards and other path-functionals of the underlying Markov process.
\end{remark}
\begin{lemma}\label{Fell}
Suppose $X$ is a canonical Markov process $X$ taking values in the space $(E,\mathcal{E})$  where $E$ is a locally compact, countably based Hausdorff space and $\mathcal{E}$ is its Borel $\sigma$-algebra. Suppose also that $F$ is a path functional of X satisfying \eqref{func} and taking values in the space $(E',\mathcal{E}')$  where $E'$ is a locally compact, countably based Hausdorff space with Borel $\sigma$-algebra $\mathcal{E}'$, then, defining  $Y=(X,F)$, $Y$ is still Markovian. If $X$ is a strong Markov process and $F$ is right-continuous, then $Y$ is strong Markov.
If $X$ is a Feller process and $F$ is right-continuous , then $Y$ is strong Markov,  has a c\`adl\`ag modification and the completion of the natural filtration of $X$, $\mathbb{F}$, is right-continuous and quasi-left continuous, and thus Y is a right process.
\end{lemma}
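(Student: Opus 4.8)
The plan is to prove the three assertions in turn, exploiting throughout the fact that, by \eqref{func}, the functional $F$ carries no information beyond its initial value and the path of $X$. Iterating \eqref{func} with $s=0$ shows that for every $u\ge0$ the variable $F_u$ is $\sigma(F_0)\vee\sigma(X_r:0\le r\le u)$-measurable, so that, writing $\mathcal{F}^X_t=\sigma(X_r:r\le t)$ and $\mathcal{F}^Y_t=\sigma(Y_r:r\le t)$, we have the crucial identity $\mathcal{F}^Y_t=\sigma(F_0)\vee\mathcal{F}^X_t$; moreover \eqref{func} furnishes a measurable map $\Psi_t$ with $Y_{t+s}=\Psi_t\big(F_s,(\theta_s X_u)_{0\le u\le t}\big)$, the dependence on $(\theta_s X_u)_{u\le t}$ also accounting for the component $X_{t+s}=\theta_s X_t$. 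For the Markov property I would work under a fixed starting law $\mathbb{P}_{(x,f)}$, for which $F_0\equiv f$ is deterministic and hence $\mathcal{F}^Y_s=\mathcal{F}^X_s$ modulo null sets. Since $F_s$ is $\mathcal{F}^Y_s$-measurable while, by the Markov property of $X$, the conditional law of the shifted path $(\theta_s X_u)_{u\le t}$ given $\mathcal{F}^X_s$ is $\mathbb{P}_{X_s}$, for bounded measurable $\phi$ on $E\times E'$ one obtains
\[
\mathbb{E}_{(x,f)}\big[\phi(Y_{t+s})\,\big|\,\mathcal{F}^Y_s\big]=H_t(X_s,F_s),\qquad H_t(y,f'):=\mathbb{E}_{y}\big[\phi\big(\Psi_t(f',(X_u)_{u\le t})\big)\big],
\]
which depends on $(X_s,F_s)=Y_s$ alone. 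This is exactly the Markov property of $Y$, with semigroup $(P^Y_t\phi)(y,f')=H_t(y,f')$.

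For the strong Markov property, assume $X$ strong Markov and $F$ right-continuous, and let $\tau$ be an $\mathcal{F}^Y$-stopping time. Because we are on the canonical path space with the usual shift, composing \eqref{func} with $\theta_\tau$ gives $Y_{\tau+t}=\Psi_t\big(F_\tau,(\theta_\tau X_u)_{u\le t}\big)$ on $\{\tau<\infty\}$. I would then approximate $\tau$ from above by the dyadic stopping times $\tau_n=2^{-n}\lceil 2^n\tau\rceil\downarrow\tau$, apply the Markov identity above at each of the countably many deterministic values taken by $\tau_n$, and pass to the limit: right-continuity of $X$ and of $F$ give $F_{\tau_n}\to F_\tau$ and $Y_{\tau_n+t}\to Y_{\tau+t}$, while the strong Markov property of $X$ identifies the limit as $H_t(X_\tau,F_\tau)$. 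Hence $\mathbb{E}\big[\phi(Y_{\tau+t})\mid\mathcal{F}^Y_\tau\big]=H_t(Y_\tau)$ on $\{\tau<\infty\}$, which is the strong Markov property of $Y$.

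Finally, suppose $X$ is Feller. Then $X$ is strong Markov with a c\`adl\`ag modification, so by the previous paragraph $Y$ is strong Markov, and $Y=(X,F)$ is right-continuous since $F$ is; the existence of left limits for $Y$ reduces to that for $F$, which I would record separately. The key observation is the identity $\mathcal{F}^Y_t=\sigma(F_0)\vee\mathcal{F}^X_t$ established above: after completion (and since $F_0\in\mathcal{F}^Y_0$) the augmented natural filtration of $Y$ coincides with that of $X$. Consequently the right-continuity and quasi-left-continuity of the completed filtration, which are classical for the Feller process $X$, are inherited verbatim by $Y$. It then remains to verify Meyer's hypotheses for $(P^Y_t)$: HD1 holds because $(P^Y_t\phi)(y,f')=\mathbb{E}_y[\phi(\Psi_t(f',(X_u)_{u\le t}))]$ is Borel in $(y,f')$, the Feller (hence Borel-right) structure of $(P_t)$ being inherited; HD2 follows from the right-continuity of the paths of $Y$ together with the right-continuity along $Y$ of $(P^Y_t)$-excessive functions. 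With HD1 and HD2 in hand, $Y$ is a right process.

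The main obstacle is the strong-Markov step, specifically the rigorous justification that the functional relation $Y_{\tau+t}=\Psi_t(F_\tau,(\theta_\tau X_u)_{u\le t})$ persists at a stopping time and that the dyadic limiting argument is valid under nothing more than right-continuity of $F$. A secondary difficulty is the clean verification of HD2 for $(P^Y_t)$ and the existence of left limits needed for the full c\`adl\`ag claim, neither of which is purely formal.
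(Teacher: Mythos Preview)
Your proposal follows the same line as the paper's proof, which is considerably more terse: it simply cites the standard facts that the completed filtration of a Feller process is right-continuous and quasi-left-continuous, that a Feller process admits a c\`adl\`ag modification and is strong Markov, and then declares the remaining verification---that appending $F$ leaves $(X,F)$ strong Markov---to be ``elementary from \eqref{func}''. Your filtration identity $\mathcal{F}^Y_t=\sigma(F_0)\vee\mathcal{F}^X_t$ and the functional representation $Y_{t+s}=\Psi_t(F_s,\theta_s X)$ make this ``elementary'' step explicit, and the paper does not attempt the HD1/HD2 check you sketch at the end.

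One remark on the strong-Markov step: the dyadic approximation is an unnecessary detour and, as written, would require continuity of $(y,f')\mapsto H_t(y,f')$ to pass to the limit on the right-hand side---something not available for a merely strong Markov $X$. But you already have the right tool in hand. Since under $\mathbb{P}_{(x,f)}$ any $\mathcal{F}^Y$-stopping time $\tau$ is an $\mathcal{F}^X$-stopping time (by your filtration identity, $F_0$ being deterministic) and $F_\tau$ is $\mathcal{F}^X_\tau$-measurable, the strong Markov property of $X$ applied \emph{directly} at $\tau$ gives
\[
\mathbb{E}_{(x,f)}\big[\phi\big(\Psi_t(F_\tau,\theta_\tau X)\big)\,\big|\,\mathcal{F}^X_\tau\big]=H_t(X_\tau,F_\tau)=H_t(Y_\tau)
\]
in one line; this is presumably what the paper intends by ``elementary from \eqref{func}''.
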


\begin{example} If $X$ is a one-dimensional Brownian motion, then $Y$, defined by
\begin{equation*}
Y_t=\Bigg(X_t,L^0_t,\sup_{0\leq s\leq t}X_s,\int^t_0\exp(-\int^s_0\alpha(X_u)du)f(X_s)ds\Bigg),\quad t\geq0,
\end{equation*}
where $L^0$ is the local time of $X$ at $0$, is a Feller process on the filtration of $X$.
\end{example}

\section{Main results}\label{main}
In this section we retain the notation of \cref{prelimmart} and \cref{prelimmark}. 
\subsection{General framework}\label{mainmart}
The assumption that $G\in\mathbb{G}$ (i.e. $G$ is a semimartingale with integrable supremum and $G=N+D$ is its canonical decomposition),  neither ensures that $N\in\mathcal{M}_{0}$, nor that $D$ is an $IV$ process, the latter, it turns out, being sufficient for the main result of this section to hold. In order to prove \cref{tabscont} we will need a stronger integrability condition on $G$.

For any adapted c\`{a}dl\`{a}g process $H$, define
\begin{equation}
H^*=\sup_{0\leq t\leq T}\lvert H_t\lvert
\end{equation}
and
\begin{equation}
\lvert\lvert H\lvert\lvert_{\mathcal{S}^p}=\lvert\lvert H^*\lvert\lvert_{L^p}:=\mathbb{E}\big[\lvert H^*\lvert ^p\big]^{1/p},\quad1\leq p\leq\infty.
\end{equation}
\begin{remark}
Note that $\bar{\mathbb{G}}=\mathcal{S}^1$, so that under the current conditions we have that $G\in\mathcal{S}^1$.
\end{remark}
For a special semimartingale $X$ with canonical decomposition $X=\bar{B}+\bar{I}$, where $\bar{B}\in\mathcal{M}_{0,loc}$ and $\bar{I}$ is a predictable $FV$ process, define the $\mathcal{H}^p$ norm, for $1\leq p\leq\infty$, by
\begin{equation}
\lvert\lvert X\lvert\lvert_{\mathcal{H}^p}=\lvert\lvert \bar{B}\lvert\lvert_{\mathcal{S}^p}+\Big\lvert\Big\lvert \int^T_0\lvert d\bar{I}_s\lvert\Big\lvert\Big\lvert_{L^p},
\end{equation}
and, as usual, write $X\in\mathcal{H}^p$ if $\lvert\lvert X\lvert\lvert_{\mathcal{H}^p}<\infty$.
\begin{remark}
A more standard definition of the $\mathcal{H}^p$ norm is with $\lvert\lvert \bar{B}\lvert\lvert_{\mathcal{S}^p}$ replaced by $\lvert\lvert[ \bar{B},\bar{B}]^{1/2}_T\lvert\lvert_{L^p}$. However, the Burkholder-Davis-Gundy inequalities (see Protter \cite{protter2005stochastic}, Theorem 48 and references therein) imply the equivalence of these norms.
\end{remark}
The following lemma follows from the fact that $\bar{I}^*\leq\int^T_0\lvert d\bar{I}_s\vert$, $\mathbb{P}-$a.s:
\begin{lemma}\label{lhp}
On the space of semimartingales, the $\mathcal{H}^p$ norm is stronger than $\mathcal{S}^p$ for $1\leq p<\infty$, i.e. convergence in $\mathcal{H}^p$ implies convergence in $\mathcal{S}^p$.
\end{lemma}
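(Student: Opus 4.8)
The plan is to prove the stronger quantitative statement that $\|X\|_{\mathcal{S}^p}\le\|X\|_{\mathcal{H}^p}$ for every special semimartingale $X$ and every $1\le p<\infty$; the convergence claim then follows at once by applying this inequality to the differences $X_n-X$. First I would fix the canonical decomposition $X=\bar B+\bar I$ guaranteed for a special semimartingale (Lemma~\ref{lunique}), with $\bar B\in\mathcal{M}_{0,loc}$ and $\bar I$ a predictable $FV$ process, so that the two seminorms appearing in the definition of $\|X\|_{\mathcal{H}^p}$ are unambiguously attached to $X$.

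The core of the argument is a pointwise estimate. For every $t\in[0,T]$ the triangle inequality gives $|X_t|=|\bar B_t+\bar I_t|\le|\bar B_t|+|\bar I_t|\le\bar B^*+\bar I^*$, and taking the supremum over $t$ yields $X^*\le\bar B^*+\bar I^*$ almost surely. I would then invoke the elementary bound flagged before the statement, namely $\bar I^*\le\int_0^T|d\bar I_s|$ $\mathbb P$-a.s. (simply because the running value of a finite-variation path is dominated by its total variation), to obtain the pathwise inequality $X^*\le\bar B^*+\int_0^T|d\bar I_s|$.

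Taking $L^p$-norms — using monotonicity of $\|\cdot\|_{L^p}$ for nonnegative variables and then Minkowski's inequality — converts this into $\|X^*\|_{L^p}\le\|\bar B^*\|_{L^p}+\big\|\int_0^T|d\bar I_s|\big\|_{L^p}$, whose right-hand side is exactly $\|X\|_{\mathcal{H}^p}$ by definition while the left-hand side is $\|X\|_{\mathcal{S}^p}$. This establishes the comparison of norms and hence that $\mathcal{H}^p$-convergence implies $\mathcal{S}^p$-convergence. I do not expect any genuine obstacle here: the only points needing a word of care are that the decomposition $X=\bar B+\bar I$ is canonical (so that both terms of the $\mathcal{H}^p$ norm are well defined, which is where Lemma~\ref{lunique} enters) and that the initial value $\bar I_0$ is tracked consistently in the definition of the total variation; neither affects the inequality. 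One should also note that the norms are genuinely inequivalent — the finite-variation part cannot in general be recovered from the supremum of the whole process — so the implication runs in one direction only, as the phrasing ``stronger than'' indicates.
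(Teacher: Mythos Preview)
Your proof is correct and follows exactly the approach indicated in the paper, which simply records the key inequality $\bar I^*\le\int_0^T|d\bar I_s|$ and leaves the routine triangle-inequality and Minkowski steps implicit. Your reference to Lemma~\ref{lunique} is slightly misplaced (that lemma concerns the specific process $G$, whereas here one only needs that the $\mathcal{H}^p$ norm is defined via the canonical decomposition of a special semimartingale), but this is a cosmetic point and does not affect the argument.
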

In general, it is challenging to check whether a given process belongs to $\mathcal{H}^1$, and thus the assumption that $G\in\mathcal{H}^1$ might be too stringent. On the other hand, under the assumptions in the Markov setting (see \cref{mainmark}), we will have that $G$ is $locally$ in $\mathcal{H}^1$. Recall that a semimartingale $X$ belongs to $\mathcal{H}^p_{loc}$, for $1\leq p\leq\infty$, if there exists a sequence of stopping times $\{\sigma_n\}_{n\in\mathbb{N}}$, increasing to infinity almost surely, such that for each $n\geq1$, the stopped process $X^{\sigma_n}$ belongs to $\mathcal{H}^p$. Hence, the main assumption in this section is the following:
\begin{assumption}\label{ahp}
$G$ is a semimartingale in both $\mathcal{S}^1$ and $\mathcal{H}^1_{loc}$.
\end{assumption}
\begin{remark}
Given that $G\in\mathcal{H}^1$, \cref{lhp} implies that \cref{ahp} is satisfied, and thus all the results of \cref{prelimmart} hold. Moreover, we then have a canonical decomposition of $G$ 
\begin{equation}\label{gdec}
G=N+D,
\end{equation} 
with $N\in\mathcal{M}_{0,UI}$ and a predictable $IV$ process $D$. On the other hand, under \cref{ahp}, \eqref{gdec} holds only for the stopped process $G^{\sigma_n}$, $n\geq1$.
\end{remark}
We finally arrive to the main result of this section:
\begin{theorem}\label{tabscont}
Suppose \cref{ahp} holds. Let $D^-$ $(D^+)$ denote the decreasing (increasing) components of $D$, as in \cref{ldpredict}. Then $A$ is, as a measure, absolutely continuous with respect to $D^-$ almost surely on $[0,T]$, and $\mu$, defined by
\begin{equation*}
\mu_t:=\frac{dA_t}{dD^-_t},\quad0\leq t\leq T,
\end{equation*}
satisfies $0\leq\mu_t\leq1$ almost surely.
\end{theorem}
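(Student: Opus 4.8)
The plan is to derive \cref{tabscont} from the classical discretisation construction underlying the Doob--Meyer decomposition (\cref{DB}), by first establishing a \emph{pointwise} comparison between the increments of the discrete compensator of $S$ and the conditional increments of $D^-$, and only then passing to the limit. Since the assertion is a local, pathwise statement on $[0,T]$, I would first reduce to the case $G\in\mathcal{H}^1$ by localising along the stopping times $\{\sigma_n\}$ furnished by \cref{ahp}: on each stochastic interval one has $N\in\mathcal{M}_{0,UI}$ and $D\in IV$, and the densities obtained piecewise glue into a global $\mu\in[0,1]$. So assume henceforth $G=N+D\in\mathcal{H}^1$, with $N\in\mathcal{M}_{0,UI}$, $D$ predictable of integrable variation, and $D=D^+-D^-$ as in \cref{ldpredict}.

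I would fix a partition $\pi_n=\{0=t_0<\dots<t_{k_n}=T\}$ of mesh tending to $0$ and let $A^n$ be the discrete compensator of $S$ on $\pi_n$, the predictable grid process with increments $\Delta A^n_{t_{i+1}}=S_{t_i}-\mathbb{E}[S_{t_{i+1}}\mid\mathcal{F}_{t_i}]\ge 0$ (nonnegative by the supermartingale property). The key step is to show
\[
\Delta A^n_{t_{i+1}}\le \mathbb{E}\big[\,D^-_{t_{i+1}}-D^-_{t_i}\mid\mathcal{F}_{t_i}\,\big].
\]
To prove this, write $S_{t_i}=\esssup_{\tau\in\mathcal{T}_{t_i,T}}\mathbb{E}[G_\tau\mid\mathcal{F}_{t_i}]$ via \cref{tsnell}, and by upward directedness pick $\tau_k\in\mathcal{T}_{t_i,T}$ with $\mathbb{E}[G_{\tau_k}\mid\mathcal{F}_{t_i}]\uparrow S_{t_i}$. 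Setting $\tau_k^{*}:=\tau_k\vee t_{i+1}\in\mathcal{T}_{t_{i+1},T}$, the difference vanishes on $\{\tau_k\ge t_{i+1}\}$, while on $\{\tau_k<t_{i+1}\}$,
\[
G_{\tau_k}-G_{\tau_k^{*}}=-(N_{t_{i+1}}-N_{\tau_k})-(D^+_{t_{i+1}}-D^+_{\tau_k})+(D^-_{t_{i+1}}-D^-_{\tau_k})\le -(N_{t_{i+1}}-N_{\tau_k})+(D^-_{t_{i+1}}-D^-_{t_i}),
\]
using that $D^+$ is increasing and that $t_i\le\tau_k\le t_{i+1}$ with $D^-$ increasing. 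Taking $\mathbb{E}[\cdot\mid\mathcal{F}_{t_i}]$ kills the martingale term by optional sampling of $N$ over $[t_i,t_{i+1}]$ (here $N\in\mathcal{M}_{0,UI}$ is essential), giving $\mathbb{E}[G_{\tau_k}\mid\mathcal{F}_{t_i}]-\mathbb{E}[G_{\tau_k^{*}}\mid\mathcal{F}_{t_i}]\le\mathbb{E}[D^-_{t_{i+1}}-D^-_{t_i}\mid\mathcal{F}_{t_i}]$. Since $\tau_k^{*}\in\mathcal{T}_{t_{i+1},T}$ and $S$ is the Snell envelope at $t_{i+1}$, $\mathbb{E}[G_{\tau_k^{*}}\mid\mathcal{F}_{t_i}]\le\mathbb{E}[S_{t_{i+1}}\mid\mathcal{F}_{t_i}]$; letting $k\to\infty$ by monotone convergence yields the bound.

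To conclude I would invoke the classical fact behind \cref{DB} that, for the class-(D) supermartingale $S$ (\cref{lcondition}), the discrete compensators $A^n$ converge to $A$ when tested against bounded predictable integrands. Testing the increment bound against a bounded nonnegative predictable $H$ that is $\mathcal{F}_{t_i}$-measurable and constant on each $(t_i,t_{i+1}]$ gives, by the tower property,
\[
\mathbb{E}\Big[\int_0^T H\,dA^n\Big]=\sum_i\mathbb{E}\big[H_{t_i}\,\Delta A^n_{t_{i+1}}\big]\le\sum_i\mathbb{E}\big[H_{t_i}\,(D^-_{t_{i+1}}-D^-_{t_i})\big]=\mathbb{E}\Big[\int_0^T H\,dD^-\Big].
\]
Passing to the limit yields $\mathbb{E}[\int_0^T H\,dA]\le\mathbb{E}[\int_0^T H\,dD^-]$ for every bounded nonnegative predictable $H$, i.e.\ the predictable measure $dA$ is dominated by $dD^-$. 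A Radon--Nikodym argument on the predictable $\sigma$-field then produces a predictable $\mu$ with $0\le\mu\le1$ and $\mathbb{E}[\int H\,dA]=\mathbb{E}[\int H\mu\,dD^-]$ for all such $H$; uniqueness of the dual predictable projection identifies $A_\cdot=\int_0^\cdot\mu_s\,dD^-_s$ up to indistinguishability, which is exactly the assertion with $\mu_t=dA_t/dD^-_t\in[0,1]$.

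The main obstacle I anticipate is this final limiting step: rigorously justifying the convergence of the discretised compensators $A^n$ to $A$ and, crucially, transferring the pointwise increment inequality to the limiting measure inequality uniformly over a rich enough class of predictable test functions. For a fixed $H$ one must approximate it by step functions $H^n$ adapted to the moving grids $\pi_n$ and control the two limits simultaneously (convergence of $\int H^n\,dA^n$ against the weak limit $A$, and of $\int H^n\,dD^-$ by dominated convergence). A secondary technical point is the localisation bookkeeping guaranteeing that $N$ is a genuine martingale over each grid interval, so that optional sampling applies; this is precisely where one reduces from $\mathcal{H}^1_{loc}\cap\mathcal{S}^1$ to the $\mathcal{H}^1$ pieces.
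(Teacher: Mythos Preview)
Your approach is essentially the same as the paper's: localise to reduce to $G\in\mathcal{H}^1$, establish the key increment inequality
\[
S_{t_i}-\mathbb{E}[S_{t_{i+1}}\mid\mathcal{F}_{t_i}]\le \mathbb{E}[D^-_{t_{i+1}}-D^-_{t_i}\mid\mathcal{F}_{t_i}]
\]
via the Snell envelope representation and the canonical decomposition of $G$ (the paper's manipulation with $\tau\mapsto\tau\vee t_{i+1}$ is identical to your $\tau_k^*$), and then pass to the limit through the discrete Doob--Meyer approximation.

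The only substantive difference is in the limiting step, which you correctly flag as the main obstacle. You propose to test against bounded nonnegative predictable $H$, control a double limit (step approximations $H^n$ on moving grids together with weak convergence of $A^n$), and then apply Radon--Nikodym on the predictable $\sigma$-field. The paper avoids this entirely by invoking Jakubowski's Koml\'os-type refinement of Rao's weak-$L^1$ result (stated as \cref{jakubowski}): there is a subsequence $\{n_l\}$ along which the Ces\`aro averages $\tfrac{1}{L}\sum_{l=1}^L A^{n_l}_t$ converge to $A_t$ a.s.\ and in $L^1$. The same device applies to $D^-$, viewed as the compensator of the class~(D) supermartingale $G-D^+$; a special hereditary property of the Jakubowski subsequence (Remark~1 in \cite{jakubowski2005almost}) allows one to pass to a common further subsequence for both. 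This yields the pathwise inequality $A_t-A_s\le D^-_t-D^-_s$ directly, without any test-function argument. Your route is in principle workable, but the paper's device is considerably cleaner and sidesteps precisely the double-limit bookkeeping you anticipate.
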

\begin{remark}
As is usual in semimartingale calculus, we treat a process of bounded variation and its corresponding Lebesgue-Stiltjes signed measure as synonymous.
\end{remark}
The proof of \cref{tabscont} is based on the discrete-time approximation of the predictable $FV$ processes in the decompositions of $S$ \eqref{DM} and $G$ \eqref{dec}. In particular, let $\mathcal{P}_n=\{0=t^n_0<t^n_1<t^n_2<...<t^n_{k_n}=T\}$, $n=1,2,...$, be an increasing sequence of partitions of $[0,T]$ with $\max_{1\leq k\leq k_n}t^n_k-t^n_{k-1}\to 0$ as $n\to\infty$. Let $S^n_t=S_{t^n_k}$ if $t^n_k\leq t<t^n_{k+1}$ and $S^n_T=S_T$ define the discretizations of $S$, and set
\begin{align}
A^n_t&=0\quad\text{if }0\leq t<t^n_1,\nonumber\\
A^n_t&=\sum^k_{j=1}\mathbb{E}[S_{t^n_{j-1}}-S_{t^n_j}\lvert\mathcal{F}_{t^n_{j-1}}]\quad\text{if }t^n_k\leq t<t^n_{k+1}\text{, }k=1,2,...,k_n-1,\nonumber\\
A^n_T&=\sum^{k_n}_{j=1}\mathbb{E}[S_{t^n_{j-1}}-S_{t^n_j}\lvert\mathcal{F}_{t^n_{j-1}}].\nonumber
\end{align}

If $S$ is \textit{regular} in the sense that for every stopping time $\tau$ and nondecreasing sequence $(\tau_n)_{n\in\mathbb{N}}$ of stopping times with $\tau=\lim_{n\to\infty}\tau_n$, we have $\lim_{n\to\infty}\mathbb{E}[S_{\tau_n}]=\mathbb{E}[S_\tau]$, or equivalently, if $A$ is continuous, Dol\'{e}ans \cite{doleans1968existence} showed that $A^n_t\to A_t$ uniformly in $L^1$ as $n\to\infty$ (see also Rogers and Williams \cite{rogers1987markov}, VI.31, Theorem 31.2). Hence, given that $S$ is regular, we can extract a subsequence $\{A^{n_l}_t\}$, such that $\lim_{l\to\infty}A^{n_l}_t=A_t$ a.s. On the other hand, it is enough for $G$ to be regular:

\begin{lemma}\label{lreg2}
Suppose $G\in\bar{\mathbb{G}}$ is a regular gains process. Then so is its Snell envelope process $S$.
\end{lemma}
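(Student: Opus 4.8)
The plan is to exploit the equivalence, recorded just above the statement, between regularity of $S$ and continuity of the predictable increasing process $A$ in the Doob--Meyer decomposition $S=M^*-A$ of \cref{DB}. Since $A$ is predictable and increasing, it is continuous if and only if it charges no predictable time, that is $\mathbb{E}[\Delta A_\rho]=0$ for every predictable stopping time $\rho$; and because $M^*$ is a uniformly integrable martingale, evaluating along an announcing sequence $\rho_n\uparrow\rho$ (with $\rho_n<\rho$) gives $\mathbb{E}[\Delta M^*_\rho]=0$ and hence $\mathbb{E}[\Delta A_\rho]=\mathbb{E}[S_{\rho-}]-\mathbb{E}[S_\rho]$. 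Using that $S$ is of class (D) by \cref{lcondition}, one has $S_{\rho_n}\to S_{\rho-}$ in $L^1$, so $\mathbb{E}[S_{\rho_n}]\to\mathbb{E}[S_{\rho-}]$; the supermartingale property then yields the harmless inequality $\mathbb{E}[S_{\rho-}]\ge\mathbb{E}[S_\rho]$. Thus the entire content of the lemma is the reverse inequality $\mathbb{E}[S_{\rho-}]\le\mathbb{E}[S_\rho]$ for predictable $\rho$, and this is where regularity of $G$ must be used.

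I would prepare two ingredients. First, the expectation form of the Snell characterisation: for any stopping time $\sigma$,
\begin{equation*}
\mathbb{E}[S_\sigma]=\sup_{\tau\in\mathcal{T}_{\sigma,T}}\mathbb{E}[G_\tau],
\end{equation*}
which follows from $S_\sigma=\esssup_{\tau\in\mathcal{T}_{\sigma,T}}\mathbb{E}[G_\tau\lvert\mathcal{F}_\sigma]$ (\cref{tsnell}) together with the fact that the family $\{\mathbb{E}[G_\tau\lvert\mathcal{F}_\sigma]:\tau\in\mathcal{T}_{\sigma,T}\}$ is directed upward, allowing the essential supremum to be pulled outside the expectation. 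Second, a conditional form of the regularity of $G$: for every predictable $\rho$, $\mathbb{E}[G_\rho\lvert\mathcal{F}_{\rho-}]=G_{\rho-}$ almost surely. This I would obtain from the defining property of a regular gains process by localising to sets $B\in\mathcal{F}_{\rho-}$, replacing $\rho$ by the predictable time equal to $\rho$ on $B$ and to $T$ off $B$, and upgrading the resulting convergence of expectations to the identity $\mathbb{E}[G_{\rho-}\mathbbm{1}_B]=\mathbb{E}[G_\rho\mathbbm{1}_B]$ via class (D).

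For the key estimate, fix a predictable $\rho$ with announcing sequence $\rho_n\uparrow\rho$ and choose near-optimal $\tau_n\in\mathcal{T}_{\rho_n,T}$ with $\mathbb{E}[G_{\tau_n}]\ge\mathbb{E}[S_{\rho_n}]-\tfrac1n$. Splitting $\tau_n$ at $\rho$ and using $\tau_n\vee\rho\in\mathcal{T}_{\rho,T}$ gives
\begin{equation*}
\mathbb{E}[S_{\rho_n}]-\tfrac1n\le\mathbb{E}[G_{\tau_n}]\le\mathbb{E}[S_\rho]+R_n,\qquad R_n:=\mathbb{E}\big[(G_{\tau_n}-G_\rho)\mathbbm{1}_{\{\tau_n<\rho\}}\big],
\end{equation*}
so it suffices to prove $R_n\to0$. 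The crucial observation is that $\{\tau_n<\rho\}=\bigcup_m\{\tau_n\le\rho_m\}\in\mathcal{F}_{\rho-}$, whence the conditional regularity of $G$ lets me replace $G_\rho$ by $G_{\rho-}$ on this event, giving $R_n=\mathbb{E}[(G_{\tau_n}-G_{\rho-})\mathbbm{1}_{\{\tau_n<\rho\}}]$. On $\{\tau_n<\rho\}$ we have $\rho_n\le\tau_n<\rho$, so $\lvert G_{\tau_n}-G_{\rho-}\rvert\le\sup_{\rho_n\le s<\rho}\lvert G_s-G_{\rho-}\rvert$; this supremum tends to $0$ almost surely by right-continuity of the paths (the interval shrinks to $\rho^-$) and is dominated by $2\sup_{0\le t\le T}\lvert G_t\rvert\in L^1$, so dominated convergence gives $R_n\to0$. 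Letting $n\to\infty$ yields $\mathbb{E}[S_{\rho-}]\le\mathbb{E}[S_\rho]$, hence $\Delta A_\rho=0$, hence $A$ is continuous and $S$ is regular.

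The main obstacle is precisely the term $R_n$: the near-optimal times $\tau_n$ need not converge monotonically to $\rho$, and the event $\{\tau_n<\rho\}$ need not be asymptotically negligible, so one cannot simply feed a monotone sequence into the definition of regularity of $G$. The resolution is the twofold use of the predictable structure, namely that $\{\tau_n<\rho\}\in\mathcal{F}_{\rho-}$ (so the conditional regularity can convert $G_\rho$ into $G_{\rho-}$) combined with the uniform left-limit control of the càdlàg paths; establishing the conditional regularity and this uniform estimate is the technical heart of the argument.
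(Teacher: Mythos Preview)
Your argument is correct (modulo a harmless slip: the convergence $\sup_{\rho_n\le s<\rho}\lvert G_s-G_{\rho-}\rvert\to 0$ comes from the existence of left limits, not right-continuity), but it follows a genuinely different route from the paper's.

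The paper works directly with the definition of regularity: given an arbitrary nondecreasing sequence $\tau_n\uparrow\tau$, it introduces the $\epsilon$-optimal hitting times $K^\epsilon_{\tau_n}=\inf\{s\ge\tau_n:G_s\ge S_s-\epsilon\}$ and invokes \cref{lreg1} to get $\mathbb{E}[S_{K^\epsilon_{\tau_n}}]=\mathbb{E}[S_{\tau_n}]$, hence $\mathbb{E}[G_{K^\epsilon_{\tau_n}}]\ge\mathbb{E}[S_{\tau_n}]-\epsilon$. The point is that $(K^\epsilon_{\tau_n})_n$ is itself nondecreasing, so regularity of $G$ applies \emph{directly} at its limit $\hat\tau\in[\tau,K^\epsilon_\tau]$, yielding $\mathbb{E}[S_\tau]\ge\mathbb{E}[G_{\hat\tau}]=\lim_n\mathbb{E}[G_{K^\epsilon_{\tau_n}}]\ge\lim_n\mathbb{E}[S_{\tau_n}]-\epsilon$. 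No reduction to predictable times, no conditional regularity, no remainder term.

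Your approach instead first passes through the equivalence ``$S$ regular $\Leftrightarrow$ $A$ continuous'', restricts to predictable $\rho$, and then needs two auxiliary facts---the conditional identity $\mathbb{E}[G_\rho\mid\mathcal{F}_{\rho-}]=G_{\rho-}$ and the left-limit uniform estimate---to kill the cross term $R_n$. Both are legitimate, but the conditional regularity step is the real cost: your localisation sketch (replace $\rho$ by $\rho\mathbbm{1}_B+T\mathbbm{1}_{B^c}$) works cleanly only for $B\in\mathcal{F}_{\rho_m}$, after which one needs a monotone-class argument to reach all of $\mathcal{F}_{\rho-}=\bigvee_m\mathcal{F}_{\rho_m}$. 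The paper's $K^\epsilon$-device sidesteps this entirely by producing a monotone sequence to which the raw definition of regularity of $G$ applies in one line.
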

See \cref{appx} for the proof.

\begin{remark} If it is not known that $G$ is regular, Kobylanski and Quenez \cite{kobylanski2012optimal}, in a slightly more general setting,  showed that $S$ is still regular, {\em provided} that $G$ is upper semicontinuous in expectation along stopping times, i.e. for all $\tau\in\mathcal{T}^{0,T}$ and for all sequences of stopping times $(\tau_n)_{n\geq1}$ such that $\tau_n\uparrow\tau$, we have
\begin{equation}
\mathbb{E}[G_\tau]\geq\limsup_{n\to\infty}\mathbb{E}[G_{\tau_n}]\nonumber.
\end{equation}
\end{remark}
 
The case where $S$ is not regular is more subtle. In his classical paper Rao \cite{rao1969decomposition} utilised the Dunford-Pettis compactness criterion and showed that, in general, $A^n_t\to A_t$ only \textit{weakly} in $L^1$ as $n\to\infty$ (a sequence $(X_n)_{n\in\mathbb{N}}$ of random variables in $L^1$ converges weakly in $L^1$ to $X$ if for every bounded random variable $Y$ we have that 
$\mathbb{E}[X_nY]\to\mathbb{E}[XY]$ as  $n\to \infty$).

Recall that $weak$ convergence in $L^1$ does not imply convergence in probability, and therefore, we cannot immediately deduce an almost sure convergence along a subsequence. However, it turns out that by modifying the sequence of approximating random variables, the required convergence can be achieved. This has been done in recent improvements of the Doob-Meyer decomposition (see Jakubowski \cite{jakubowski2005almost} and Beiglb\"{o}ck et al. \cite{beiglboeck2012short}. Also, Siorpaes \cite{siorpaes2014approx} showed that there is a subsequence that works for all $(t,\omega)\in[0,T]\times\Omega$ simultaneously). In particular, Jakubowski proceeds as Rao, but then uses Koml\'{o}s's theorem \cite{komlos1967generalization} and proves the following:
\begin{theorem}\label{jakubowski}
There exists a subsequence $\{n_l\}$ such that for $t\in\cup^\infty_{n=1}\mathcal{P}_n$ and as $L\to\infty$
\begin{equation}\label{jakubowskieq}
\frac{1}{L}\Big(\sum^L_{l=1}A^{n_l}_t\Big)\to A_t,\quad\text{a.s. and in }L^1.
\end{equation}
\end{theorem}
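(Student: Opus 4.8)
The plan is to combine Rao's weak-$L^1$ convergence of the discrete approximants with the Ces\`{a}ro-averaging device supplied by Koml\'{o}s's theorem \cite{komlos1967generalization}. First I would record the elementary properties of the approximating sequence. Since $S$ is a supermartingale, each increment $\mathbb{E}[S_{t^n_{j-1}}-S_{t^n_j}\mid\mathcal{F}_{t^n_{j-1}}]$ is non-negative, so $A^n$ is non-negative and non-decreasing in $t$, and by the tower property $\mathbb{E}[A^n_T]=\mathbb{E}[S_0-S_T]$, a constant independent of $n$. Consequently, for every $t\in\cup^\infty_{n=1}\mathcal{P}_n$ the family $(A^n_t)_n$ is bounded in $L^1$, being dominated by $A^n_T$.

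Next I would invoke Rao's theorem \cite{rao1969decomposition}: for each fixed such $t$, $A^n_t\to A_t$ weakly in $L^1$. Two consequences are needed. By the Dunford--Pettis criterion a weakly convergent sequence in $L^1$ is relatively weakly compact, hence uniformly integrable, so $(A^n_t)_n$ is uniformly integrable; and the weak limit pins down the candidate, since the Ces\`{a}ro average of any subsequence of $(A^n_t)_n$ again converges weakly in $L^1$ to the same $A_t$.

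The key step is Koml\'{o}s's theorem. Because $(A^n_T)_n$ is bounded in $L^1$, there is a subsequence along which the Ces\`{a}ro averages converge almost surely, and -- crucially -- every further subsequence has Ces\`{a}ro averages converging almost surely to the same limit. As $\cup^\infty_{n=1}\mathcal{P}_n$ is countable, I would diagonalize over an enumeration $\{s_1,s_2,\ldots\}$ of this set: apply Koml\'{o}s at $s_1$, pass to the resulting subsequence and apply it at $s_2$, and so on, finally taking the diagonal subsequence $\{n_l\}$. The universality clause of Koml\'{o}s guarantees that for each $j$ the diagonal Ces\`{a}ro averages at $s_j$ still converge almost surely.

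Finally I would identify the limits. For each $t\in\cup^\infty_{n=1}\mathcal{P}_n$ the diagonal averages $\frac{1}{L}\sum^L_{l=1}A^{n_l}_t$ converge almost surely; being uniformly integrable (uniform integrability is inherited by convex combinations) they also converge in $L^1$, and hence weakly, so by the previous paragraph their limit is $A_t$, which is \eqref{jakubowskieq}. The main obstacle is reconciling the two modes of convergence: Rao supplies only weak $L^1$ convergence, too weak to yield an almost-sure statement even along subsequences, whereas Koml\'{o}s supplies almost-sure convergence of Ces\`{a}ro averages but a priori to an unidentified limit. The crux is that the uniform integrability extracted from Rao via Dunford--Pettis promotes the Koml\'{o}s almost-sure limit to an $L^1$, hence weak, limit that can then be matched with $A_t$, and that the universality in Koml\'{o}s is exactly what makes the diagonalization over the countably many times $t$ legitimate.
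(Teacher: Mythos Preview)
The paper does not supply its own proof of this theorem: it is quoted as a result of Jakubowski \cite{jakubowski2005almost}, with only the one-line description that ``Jakubowski proceeds as Rao, but then uses Koml\'{o}s's theorem''. Your proposal is a correct and faithful expansion of precisely that strategy --- Rao's weak-$L^1$ convergence to pin down the target, Dunford--Pettis to extract uniform integrability, Koml\'{o}s (with its hereditary clause) to obtain almost-sure Ces\`{a}ro convergence, and a diagonalisation over the countable set $\cup_n\mathcal{P}_n$ --- so there is nothing further to compare.
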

 
\begin{proof}[Proof of \cref{tabscont}] Let $(\sigma_n)_{n\geq1}$ be a localising sequence for $G$ such that, for each $n\geq1$, $G^{\sigma_n}=(G_{t\wedge\sigma_n})_{0\leq t\leq T}$ is in $\mathcal{H}^1$. Similarly, set $S^{\sigma_n}=(S_{t\wedge\sigma_n})_{0\leq t\leq T}$ for a fixed $n\geq1$. We need to prove that
\begin{equation}\label{increments}
0\leq A^{\sigma_n}_t-A^{\sigma_n}_s\leq (D^-)^{\sigma_n}_t-(D^-)^{\sigma_n}_s\text{ a.s.},
\end{equation}
since then, as $\sigma_n\uparrow\infty$ almost surely, as $n\to\infty$, and by uniqueness of $A$ and $D^-$, the result follows. In particular, since $A$ is increasing, the first inequality in \eqref{increments} is immediate, and thus we only need to prove the second one. 

After localisation we assume that $G\in\mathcal{H}$. For any $0\leq t\leq T$ and $0\leq\epsilon\leq T-t$ we have that
\begin{align*}
\mathbb{E}[S^{}_{t+\epsilon}\lvert\mathcal{F}_{t}]&=\mathbb{E}\Big[\esssup_{\tau\in\mathcal{T}_{t+\epsilon,T}}\mathbb{E}[G_\tau\lvert\mathcal{F}_{t+\epsilon}]\Big\lvert\mathcal{F}_{t}\Big]\nonumber\\
&\geq\mathbb{E}\Big[\mathbb{E}[G_\tau\lvert\mathcal{F}_{t+\epsilon}]\Big\lvert\mathcal{F}_{t}\Big]\nonumber\\
&=\mathbb{E}[G_\tau\lvert\mathcal{F}_{t}]\text{ a.s.},
\end{align*}
where $\tau\in\mathcal{T}_{t+\epsilon,T}$ is arbitrary. Therefore
\begin{equation}\label{diff}
\mathbb{E}[S^{}_{t+\epsilon}\lvert\mathcal{F}_{t}]\geq\esssup_{\tau\in\mathcal{T}_{t+\epsilon,T}}\mathbb{E}[G_\tau\lvert\mathcal{F}_{t}]\text{ a.s.}
\end{equation}
Then by the definition of $S$ and using \eqref{diff} together with the properties of the $essential$ $supremum$ (see also Lemma \ref{lem:directed} in the \cref{appx}) we obtain
\begin{align}\label{loc1}
\mathbb{E}[S^{}_t-S^{}_{t+\epsilon}\lvert\mathcal{F}_{t}]&\leq\esssup_{\tau\in\mathcal{T}_{t,T}}\mathbb{E}[G_\tau\lvert\mathcal{F}_{t}]-\esssup_{\tau\in\mathcal{T}_{t+\epsilon,T}}\mathbb{E}[G_\tau\lvert\mathcal{F}_{t}]\nonumber\\
&\leq\esssup_{\tau\in\mathcal{T}_{t,T}}\mathbb{E}[G_\tau-G_{\tau\vee(t+\epsilon)}\lvert\mathcal{F}_{t}]\nonumber\\
&=\esssup_{\tau\in\mathcal{T}_{t,t+\epsilon}}\mathbb{E}[G_\tau-G_{\tau\vee(t+\epsilon)}\lvert\mathcal{F}_{t}]\\
&=\esssup_{\tau\in\mathcal{T}_{t,t+\epsilon}}\mathbb{E}[G^{}_\tau-G^{}_{t+\epsilon}\lvert\mathcal{F}_{t}]\text{ a.s.}\nonumber
\end{align}
The first equality in \eqref{loc1} follows by noting that $\mathcal{T}_{t+\epsilon,T}\subset\mathcal{T}_{t,T}$, and that for any $\tau\in\mathcal{T}_{t+\epsilon,T}$ the term inside the expectation vanishes. Using the decomposition of $G^{}$ and by observing that, for all $\tau\in\mathcal{T}_{t,t+\epsilon}$, $(D^+_\tau-D^+_{t+\epsilon})\leq0$, while $N^{}$ is a uniformly integrable martingale, we obtain 
\begin{align}\label{loc2}
\mathbb{E}[S^{}_t-S^{}_{t+\epsilon}\lvert\mathcal{F}_{t}]&\leq\esssup_{\tau\in\mathcal{T}_{t,t+\epsilon}}\mathbb{E}[D^-_{t+\epsilon}-D^-_\tau\lvert\mathcal{F}_{t}]\nonumber\\
&=\mathbb{E}[D^-_{t+\epsilon}-D^-_t\lvert\mathcal{F}_{t}]\text{ a.s.}
\end{align}

Finally, for $0\leq s<t\leq T$, applying Theorem \ref{jakubowski} to $A$ together with \eqref{loc2} gives 
\begin{align}\label{approx}
A_t-A_s&=\lim_{L\to\infty}\frac{1}{L}\Big(\sum_{l=1}^{L}\sum_{j=k'}^{k}\mathbb{E}[S_{t_{j-1}^{n_l}}-S_{t_j^{n_l}}\lvert\mathcal{F}_{t_{j-1}^{n_l}}] \Big)\nonumber\\
&\leq\lim_{L\to\infty}\frac{1}{L}\Big(\sum_{l=1}^{L}\sum_{j=k'}^{k}\mathbb{E}[D^-_{t_{j}^{n_l}}-D^-_{t_{j-1}^{n_l}}\lvert\mathcal{F}_{t_{j-1}^{n_l}}] \Big)\text{ a.s.},
\end{align}
where $k'\leq k$ are such that $t^{n_l}_{k'}\leq s<t^{n_l}_{k'+1}$ and $t^{n_l}_k\leq t<t^{n_l}_{k+1}$ . Note that $D^-$ is also the predictable, increasing $IV$ process in the Doob-Meyer decomposition of the class (D) supermartingale $(G-D^+)$. Therefore we can approximate it in the same way as $A$, so that $D^-_t-D^-_s$ is the almost sure limit along, possibly, a further subsequence $\{n_{l_k}\}$ of $\{n_l\}$, of the right hand side of \eqref{approx}. Here we rely on the special property of the subsequence $\{n_l\}$. In particular, it can be chosen such that convergence \eqref{jakubowskieq} also works along suitable subsequence of any further subsequence, see Remark 1 in Jakubowski \cite{jakubowski2005almost}. 
\end{proof}
We finish this section with a lemma that gives an easy test as to whether the given process belongs to $\mathcal{H}^1_{loc}$ (consult \cref{appx} for the proof).
\begin{lemma}\label{lhloc}
Let $X\in\mathbb{G}$ with a canonical decomposition $X=L+K$, where $L\in\mathcal{M}_{0,loc}$ and $K$ is a predictable $FV$ process. If the jumps of $K$ are uniformly bounded by some finite constant $c>0$, then $X\in\mathcal{H}^1_{loc}$.
\end{lemma}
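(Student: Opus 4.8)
The plan is to build an explicit localising sequence out of the total-variation process of $K$ and to use the bounded-jump hypothesis to keep that variation integrable after stopping. Write $V_t=\int_0^t\lvert dK_s\lvert$ for the total-variation process of $K$; by \cref{ldpredict} this is a predictable, increasing $FV$ process, its jumps satisfy $\Delta V_t=\lvert\Delta K_t\lvert\leq c$, and, since $K\in FV$, $V$ is finite-valued on $[0,T]$ almost surely. Because $L_0=0$ in the canonical decomposition, we also have $K_0=X_0$, so that $\lvert K_t\lvert\leq\lvert X_0\lvert+V_t$ for every $t$.

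First I would define the stopping times $\tau_n=\inf\{t\geq0:V_t\geq n\}$, $n\geq1$. As $V$ is right-continuous and adapted each $\tau_n$ is a stopping time, and since $V_T<\infty$ a.s. we have $\tau_n\uparrow\infty$ almost surely (in the finite-horizon case the $\tau_n$ eventually exceed $T$), so $(\tau_n)$ is a genuine localising sequence. The crucial estimate is that $V_{\tau_n-}\leq n$ while the jump of $V$ at $\tau_n$ is at most $c$, whence $V_{T\wedge\tau_n}\leq V_{\tau_n-}+\Delta V_{\tau_n}\leq n+c$ in all cases. This is precisely where the uniform bound on the jumps of $K$ is indispensable: without it the single jump of $V$ at the moment it crosses level $n$ could be arbitrarily large, and $\mathbb{E}[V_{T\wedge\tau_n}]$ would be uncontrolled.

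It then remains to check that $X^{\tau_n}\in\mathcal{H}^1$ for each fixed $n$. The canonical decomposition of $X^{\tau_n}$ is $L^{\tau_n}+K^{\tau_n}$, since stopping preserves both the local-martingale property (with $L^{\tau_n}$ still started at $0$) and the predictability of the $FV$ part. The variation contribution to the $\mathcal{H}^1$ norm is immediate: $\big\lVert\int_0^\cdot\lvert dK^{\tau_n}_s\lvert\big\rVert_{L^1}=\mathbb{E}[V_{T\wedge\tau_n}]\leq n+c<\infty$. For the martingale contribution I would bound $\sup_{t\leq T}\lvert L^{\tau_n}_t\lvert\leq X^*+\lvert X_0\lvert+V_{T\wedge\tau_n}\leq X^*+\lvert X_0\lvert+(n+c)$, using $L=X-K$ and the estimate above; taking expectations and invoking $X\in\mathcal{S}^1=\bar{\mathbb{G}}$ (so that $\mathbb{E}[X^*]<\infty$ and $\lvert X_0\lvert\leq X^*$) yields $\lVert L^{\tau_n}\rVert_{\mathcal{S}^1}<\infty$. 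Adding the two contributions gives $\lVert X^{\tau_n}\rVert_{\mathcal{H}^1}<\infty$, i.e. $X\in\mathcal{H}^1_{loc}$.

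The argument is essentially routine once the localisation is chosen; the only substantive step is the bounded-jump estimate $V_{T\wedge\tau_n}\leq n+c$, which is the sole place the hypothesis is used and which is exactly what renders the variation integrable after stopping. The minor points to verify are that $V_T<\infty$ a.s. (guaranteed by $K\in FV$) so that the $\tau_n$ genuinely exhaust $[0,T]$, and that $\lvert X_0\lvert\leq X^*$ so that the non-zero initial value of $K$ causes no difficulty.
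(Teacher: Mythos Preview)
Your proof is correct and follows essentially the same approach as the paper: the same localising sequence $\tau_n=\inf\{t:\int_0^t\lvert dK_s\lvert\geq n\}$, the same bounded-jump estimate $V_{\tau_n}\leq n+c$, and the same triangle-inequality bound on $\lVert L^{\tau_n}\rVert_{\mathcal S^1}$ via $L=X-K$ and $X\in\mathcal S^1$. If anything, you are a little more careful than the paper in tracking the non-zero initial value $K_0=X_0$ when bounding $\sup_t\lvert K^{\tau_n}_t\lvert$.
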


\subsection{Markovian setting}\label{mainmark}

In the rest of the section (and the paper) we consider the following optimal stopping problem:
\begin{equation}\label{markov}
V(x)=\sup_{\tau\in\mathcal{T}^{0,T}}\mathbb{E}_x[g(X_\tau)],\quad x\in E,
\end{equation}
for a measurable function $g:E\to\mathbb{R}$ and a Markov process $X$ satisfying the following set of assumptions:
\begin{assumption}\label{ass:special}
$X$ is a right process.
\end{assumption}
\begin{assumption}\label{ass:sup}
$\sup_{0 \leq t\leq T}\lvert g(X_t)\lvert\in L^1(\mathbb{P}_x)$, $x\in E$.
\end{assumption}
\begin{assumption}\label{ass:dom}
$g\in\mathbb{D}(\mathcal{L})$, i.e. $g(\cdot)$ belongs to the domain of a  martingale generator of $X$.
\end{assumption}
\begin{remark} \cref{Fell} tells us that if $X$ is Feller and $F$ is an adapted path-functional of the form given in \eqref{func} then (a modification of)  $(X,F)$ satisfies \cref{ass:special}.
\end{remark}
\begin{example}
Let $X=(X_t)_{t\geq0}$ be a Markov process and let $\mathbb{D}(\hat{\mathcal{L}})$ be the domain of a classical infinitesimal generator of $X$, i.e. the set of measurable functions $f:E\to\mathbb{R}$, such that $\lim_{t\to0}(\mathbb{E}_x[f(X_t)]-f(x))/t$ exists. Then $\mathbb{D}(\mathcal{\hat{L}})\subset\mathbb{D}(\mathcal{L})$. 
In particular, 
\begin{itemize}
\item[1.] if $X=(X_t)_{t\geq0}$ is a solution of an SDE driven by a Brownian motion in $\mathbb{R}^d$, then $C^2\subset\mathbb{D}(\mathcal{\hat{L}})$;
\item[2.] if the state space $E$ is finite (so that $X$ is a continuous time Markov chain), then any measurable and bounded $f:E\to\mathbb{R}$ belongs to $\mathbb{D}(\mathcal{\hat{L}})$
\item[3.] if $X$ is a L\'evy process on ${\mathbb R}^d$ with finite variance increments then $C^2({\mathbb R}^d,{\mathbb R})\subset \mathbb{D}(\mathcal{\hat{L}})$
 \end{itemize}

 \end{example}
 Note that the gains process is of the form $G=g(X)$, while by \cref{thm:VisS}, the corresponding Snell envelope is given by
\begin{equation*}
S_t^T:=\begin{cases}
V(X_t):t< T,\\
g(X_T):t\geq T.
\end{cases}
\end{equation*}
In a similar fashion to that in the general setting, \cref{ass:sup} ensures the class (D) property for the gains and Snell envelope processes. Moreover, under \cref{ass:dom},
\begin{equation}\label{gendec}
g(X_t)=g(x)+M^g_t+\int^t_0\mathcal{L}g(X_s)ds,\quad0\leq t\leq T,\,x\in E,
\end{equation}
and the $FV$ process in the semimartingale decomposition of $G=g(X)$ is absolutely continuous with respect to Lebesgue measure, and therefore predictable, so that \eqref{gendec} is a canonical semimartingale decomposition of $G=g(X)$. Then, by \cref{ass:sup}, and using \cref{lhloc}, we also deduce that $g(X)\in\mathcal{H}^1_{loc}$.
\begin{remark}
When $T<\infty$, the optimal stopping problem, in general, is time-inhomogeneous, and we need to replace the process $X_t$ by the process $Z_t=(t,X_t)$, $t\in[0,T]$, so that \eqref{markov} reads
\begin{equation}\label{markov1}
\tilde{V}(t,x)=\sup_{\tau\in\mathcal{T}_{0,T-t}}\mathbb{E}_{t,x}[\tilde{g}(t+\tau,X_{t+\tau})],\quad x\in E,
\end{equation}
where $\tilde{g}:[0,T]\times E\to\mathbb{R}$ is a new payoff function (consult Peskir and Shiryaev \cite{peskir2006optimal} for examples). In this case, \cref{ass:dom} should be replaced by a requirement that there exists a measurable function $\tilde{h}:[0,T]\times E\to\mathbb{R}$ such that $M^{\tilde{g}}_t:=\tilde{g}(Z_t)-\tilde{g}(0,x)-\int^t_0\tilde{h}(Z_s)ds$ defines a local martingale.
\end{remark}

The crucial result of this section is the following: 
\begin{theorem}\label{pvdom}
Suppose Assumptions \ref{ass:special}, \ref{ass:sup} and \ref{ass:dom} hold. Then $V\in\mathbb{D}(\mathcal{L})$.
\end{theorem}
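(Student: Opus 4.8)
The plan is to read the generator of $V$ directly off the Doob--Meyer decomposition of $S=V(X)$, the only genuine work being to show that the density supplied by \cref{tabscont} is a function of the current state. First I would recall from \cref{thm:VisS} that $S_t=V(X_t)$ for $t<T$, and write its decomposition \eqref{DM} as $S=M^*-A$ with $M^*\in\mathcal{M}_{0,UI}$ and $A$ predictable and increasing. Under \cref{ass:dom} the canonical decomposition of $G=g(X)$ is \eqref{gendec}, so $D_t=\int_0^t\mathcal{L}g(X_s)\,ds$ and hence, by \cref{ldpredict}, $D^-_t=\int_0^t(\mathcal{L}g(X_s))^-\,ds$. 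Applying \cref{tabscont} then yields a predictable density $0\le\mu\le1$ with
\begin{equation}\label{Aint}
A_t=\int_0^t\mu_s\,(\mathcal{L}g(X_s))^-\,ds,\qquad 0\le t\le T.
\end{equation}
In particular $A$ is continuous, and to prove $V\in\mathbb{D}(\mathcal{L})$ it suffices to show that the integrand $\phi_s:=\mu_s(\mathcal{L}g(X_s))^-$ agrees, for a.e.\ $s$ and $\mathbb{P}_x$-a.s., with $\rho(X_s)$ for some Borel $\rho:E\to[0,\infty)$; then \eqref{Aint} gives $V(X_t)=V(x)+M^*_t-\int_0^t\rho(X_s)\,ds$, which is exactly the statement $\mathcal{L}V=-\rho$.

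The key step would be to show that $A$ is a continuous additive functional of $X$, i.e.\ $A_{t+s}=A_t+A_s\circ\theta_t$ $\mathbb{P}_x$-a.s. Here homogeneity is essential: because $V$ depends only on the state (and not on time), fixing $t$ and viewing $(V(X_{t+s}))_{s\ge0}$ as a supermartingale for $(\mathcal{F}_{t+s})_{s\ge0}$, its compensator is on one hand $(A_{t+s}-A_t)_{s\ge0}$, and on the other hand, by the Markov property at time $t$ (the conditional law of the post-$t$ path given $\mathcal{F}_t$ being $\mathbb{P}_{X_t}$), it equals $(A_s\circ\theta_t)_{s\ge0}$. Uniqueness of the Doob--Meyer decomposition forces these two predictable increasing processes to be indistinguishable, which gives additivity. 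Equivalently, one notes that $V$ is the least super-mean-valued majorant of $g$, so $V(X)$ is a supermartingale whose compensator is a continuous additive functional.

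The second step identifies the density. Either by the theory of additive functionals, or directly by setting $\rho(x):=\limsup_{s\downarrow0}\tfrac1s\,\mathbb{E}_x[A_s]$ (finite and Borel, since $0\le\tfrac1s\mathbb{E}_x[A_s]\le\tfrac1s\int_0^s P_u((\mathcal{L}g)^-)(x)\,du$ by \eqref{Aint} and $\mu\le1$), one uses additivity together with the Markov property:
\begin{equation}\label{condinc}
\frac1s\,\mathbb{E}_x\big[A_{t+s}-A_t\,\big|\,\mathcal{F}_t\big]=\frac1s\,\mathbb{E}_x\big[A_s\circ\theta_t\,\big|\,\mathcal{F}_t\big]=\frac1s\,\mathbb{E}_{X_t}[A_s]\xrightarrow[s\downarrow0]{}\rho(X_t).
\end{equation}
On the other hand, by \eqref{Aint} and Lebesgue differentiation the left-hand side of \eqref{condinc} converges to $\phi_t=\mu_t(\mathcal{L}g(X_t))^-$ for a.e.\ $t$, $\mathbb{P}_x$-a.s. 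Hence $\phi_t=\rho(X_t)$ a.e., and the bound in \cref{tabscont} lets us take $0\le\rho\le(\mathcal{L}g)^-$. Substituting back into \eqref{Aint} finishes the argument, with $\mathcal{L}V=-\rho\le0$.

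I expect the main obstacle to lie precisely in rigorously justifying the two limiting identifications: first, the additivity of the compensator $A$, which requires a careful passage between the Doob--Meyer decomposition under $\mathbb{P}_x$ and its $\theta_t$-shifted counterpart and relies on $V$ being state-dependent only; and second, the interchange in \eqref{condinc} of the conditional-expectation limit with Lebesgue differentiation, so as to pin $\phi_t$ down as $\rho(X_t)$ off a Lebesgue-null set of times. The remaining hypotheses enter only lightly: \cref{ass:sup} secures the class (D) property (\cref{lcondition}) and hence the decomposition \eqref{DM}, while \cref{ass:special} underpins the additive-functional machinery. Finally, when $T<\infty$ the value function is time-dependent, and one would first pass to the space--time process $Z_t=(t,X_t)$ as in \eqref{markov1}, to which the homogeneous argument above applies verbatim.
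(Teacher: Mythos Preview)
Your proposal is correct and follows the same route as the paper: apply \cref{tabscont} to obtain $dA_t\ll dt$, argue that $A$ is a (continuous) additive functional of $X$, and then conclude that its Lebesgue density is a Borel function of the state. For the two obstacles you flag, the paper simply appeals to \c{C}inlar et al.~\cite{cinlar1980}: additivity of $A$ is inherited from that of $V(X_t)-V(x)$ via the general fact that the canonical components of an additive semimartingale are themselves additive, and the density identification is exactly their Proposition~3.56 (taking $\beta(x)=\mathbb{E}_x[\liminf_{s\downarrow 0,\,s\in\mathbb{Q}}A_s/s]$), which sidesteps the conditional-expectation/Lebesgue-differentiation interchange you were worried about.
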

\begin{proof}
In order to be consistent with the notation in the general framework, let
\begin{equation*}
D_t:=g(X_0)+\int^t_0\mathcal{L}g(X_s)ds,\quad0\leq t\leq T.
\end{equation*}
Recall \cref{ldpredict}. Then $D^+$ and $D^-$ are explicitly given (up to initial values) by
\begin{align*}
D^+_t:&=\int^t_0\mathcal{L}g(X_s)^+ds,\nonumber\\
D^-_t:&=\int^t_0 \mathcal{L}g(X_s)^-ds.
\end{align*}
In particular, $D^-$ is, as a measure, absolutely continuous with respect to Lebesgue measure. By applying \cref{tabscont}, we deduce that 
\begin{equation}\label{dec2}
V(X_t)=V(x)+M^*_t-\int_0^t\mu_s\mathcal{L}g(X_s)^-ds,\quad 0\leq t\leq T,\; x\in\mathbb{R},
\end{equation}
where $\mu$ is a non-negative Radon-Nikodym derivative with $0\leq\mu_s\leq1$. Then we also have that $\int^t_0\lvert\mu_s \mathcal{L}g(X_s)^-\lvert ds<\infty$, for every $0\leq t\leq T$.

In order to finish the proof we are left to show that there exists a suitable measurable function $\lambda:E\to\mathbb{R}$ such that $A_t=\int^t_0\mu_s\mathcal{L}g(X_s)^- ds=\int^t_0\lambda(X_s)ds$ a.s., for all $t\in[0,T]$. For this, recall that a process $Z$ (on $(\Omega, \mathcal{G},\mathcal{G}_t,X_t,\theta_t,\mathbb{P}_x:x\in E,t\in\mathbb{R}_+)$ or just on $\mathcal C(X)$) is
 \textit{additive} if $Z_0=0$ a.s. and $Z_{t+s}=Z_t+Z_s\circ\theta_t$ a.s., for all $s,t\in[0,T]$. Moreover, for any measurable function $f:E\to\mathbb{R}$, $Z^f_t=f(X_t)-f(x)$ defines an additive process. In particular, if $Z^f$ is also a semimartingale, then the martingale and $FV$ processes in the decomposition of $Z^f$ are also additive (\c{C}inlar et al. \cite{cinlar1980} gives necessary and sufficient conditions for $Z^f$ to be a semimartingale). 

Finally, we have that $A_t=\int^t_0\mu_s\mathcal{L}g(X_s)^-ds$, $t\in[0,T]$, is an increasing additive process such that $dA_t \ll dt$. Set $K_t=\liminf_{s\downarrow0,s\in\mathbb{Q}}(A_{t+s}-A_t)/s$ and $\beta(x)=\mathbb{E}_x[K_0]$, $x\in E$. Then by Proposition 3.56 in \c{C}inlar et al. \cite{cinlar1980}, we have that, for $t\in[0,T]$, $A_t=\int^t_0\beta(X_s)ds$ $\mathbb P_x$-a.s. for each $x\in E$.
\end{proof}
\begin{remark}
In some specific examples it is possible to relax \cref{ass:dom}. Let $\mathcal{S}:=\{x\in E: V(x)=g(x)\}$ be the stopping region. It is well-known that $S=V(X)$ is a martingale on the go region $\mathcal{S}^c$, i.e. $M^c$ given by
$$
M^c_t\defto\int_0^t 1_{(X_{s-}\in \S^c)}dS_s
$$
is a martingale (see \cref{lreg1}). 
This implies that $\int_0^t1_{(X_{s-}\in \S^c)}dA_s=0$, and therefore we note that in order for $V\in\mathbb{D}(\mathcal{L})$, we need $D$ to be absolutely continuous with respect to Lebesgue measure $\lambda$ only on the stopping region i.e. that $\int_0^{\cdot}1_{(X_{s-}\in \S)}dD_s\ll \lambda$. For example, let $E=\mathbb{R}$, fix $K\in\mathbb{R}_+$ and consider $g(\cdot)$ given by $g(x)=(K-x)^+$, $x\in E$. We can easily show, under very weak conditions, that $\mathcal{S}\subset [0,K]$ and so we need only have that $\int_0^{\cdot}1_{(X_{s-}<K)}dD_s$ is absolutely continuous.
\end{remark}

\section{Applications: duality, smooth fit}\label{app}
In this section we retain the setting of \cref{mainmark}.
\subsection{Duality}\label{duality}
Let $x\in E$ be fixed. As before, let $\mathcal M_{0,UI}^x$ denote all the right-continuous uniformly integrable c\`{a}dl\`{a}g martingales (started at zero) on the filtered space $(\Omega, \mathcal F,\mathbb F,\P_x)$, $x\in E$.
The main result of Rogers \cite{rogers2002monte} in the Markovian setting reads:
\begin{theorem}\label{tdual}
Suppose Assumptions \ref{ass:special} and \ref{ass:sup} hold. Then
\begin{equation}\label{gendual}
V(x)=\sup_{\tau\in\mathcal{T}^{0,T}}\mathbb{E}_x[G_\tau]=\inf_{M\in\mathcal{M}_{0,UI}^x}\mathbb{E}_x\Big[\sup_{0\leq t\leq T}\Big(G_t-M_t\Big)\Big],\quad x\in E.
\end{equation}
\end{theorem}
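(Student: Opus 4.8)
The plan is to split the claim into the two opposing inequalities that together give the stated equality: a \emph{weak duality} bound that holds against every admissible martingale, and a \emph{strong duality} bound witnessed by the martingale part of the Doob--Meyer decomposition of the Snell envelope. Only the first equality in \eqref{gendual} is immediate, being the definition of $V$ (together with \cref{thm:VisS}, which lets us take the supremum over $\mathcal{F}$-stopping times); all the work lies in the second equality.

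First I would establish weak duality. Fix $x\in E$ and let $M\in\mathcal{M}_{0,UI}^x$ be arbitrary. Since $M$ is a uniformly integrable martingale started at zero, the optional sampling theorem gives $\mathbb{E}_x[M_\tau]=M_0=0$ for every $\tau\in\mathcal{T}^{0,T}$, where for $T=\infty$ one reads $M_\infty=\lim_{t\to\infty}M_t$ (which exists a.s.\ and in $L^1$ by uniform integrability). Hence, for each such $\tau$,
\[
\mathbb{E}_x[G_\tau]=\mathbb{E}_x[G_\tau-M_\tau]\leq\mathbb{E}_x\Big[\sup_{0\leq t\leq T}\big(G_t-M_t\big)\Big].
\]
Taking the supremum over $\tau\in\mathcal{T}^{0,T}$ on the left and then the infimum over $M$ on the right yields
\[
V(x)\leq\inf_{M\in\mathcal{M}_{0,UI}^x}\mathbb{E}_x\Big[\sup_{0\leq t\leq T}\big(G_t-M_t\big)\Big].
\]

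For the reverse inequality I would exhibit an optimal martingale. Under \cref{ass:sup} the process $G=g(X)$ belongs to $\bar{\mathbb{G}}$ under each $\mathbb{P}_x$, so by \cref{lcondition} its Snell envelope $S$ is of class (D), and \cref{DB} supplies the decomposition $S=M^*-A$ with $M^*\in\mathcal{M}_{0,UI}^x$ and $A$ predictable and nondecreasing; crucially $A$ is \emph{not} normalised to start at zero (as flagged in the remark following \cref{DB}). Choosing $M=M^*$ and using that $S$ dominates $G$ (\cref{tsnell}), I obtain the \emph{pathwise} estimate
\[
G_t-M^*_t\leq S_t-M^*_t=-A_t\leq-A_0,\qquad 0\leq t\leq T,
\]
the last inequality holding because $A$ is nondecreasing. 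Since $\mathcal{F}_0$ is trivial and $M^*_0=0$, we have $-A_0=S_0=V(x)$ (\cref{tsnell}, \cref{thm:VisS}), whence $\sup_{0\leq t\leq T}(G_t-M^*_t)\leq V(x)$ almost surely. Taking expectations gives $\mathbb{E}_x[\sup_{0\leq t\leq T}(G_t-M^*_t)]\leq V(x)$, so the infimum on the right of \eqref{gendual} is at most $V(x)$; combined with weak duality this forces equality, and shows that the infimum is in fact attained at $M^*$.

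The argument is largely bookkeeping once the Doob--Meyer decomposition is available, so the \emph{hard part} is not any single estimate but the care needed at two junctures. First, the identity $\mathbb{E}_x[M_\tau]=0$ must hold uniformly over all $\tau\in\mathcal{T}^{0,T}$, including $\tau=\infty$; this leans entirely on uniform integrability and on the conventions for $G_\infty$ and $M_\infty$ when $T=\infty$, and is the place where a naive (merely local) martingale would fail. Second, I must confirm that the decomposition $S=M^*-A$ and the identification $S_0=V(x)$ genuinely hold under $\mathbb{P}_x$ for each fixed $x$, so that $M^*$ really lies in $\mathcal{M}_{0,UI}^x$; both are guaranteed by the class (D) property coming from \cref{ass:sup} (with \cref{ass:special} ensuring we may work on the canonical realisation). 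No appeal to \cref{ass:dom} or to the absolute-continuity result \cref{tabscont} is needed here — those enter only when one wishes to re-express $M^*$ and the optimal control in terms of the generator.
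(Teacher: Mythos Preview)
Your proof is correct and is precisely the standard Davis--Karatzas/Rogers argument: weak duality via optional sampling for any $M\in\mathcal{M}_{0,UI}^x$, and strong duality via the martingale $M^*$ in the Doob--Meyer decomposition of $S$, using the pathwise bound $G_t-M^*_t\leq S_t-M^*_t=-A_t\leq -A_0=S_0=V(x)$. The paper itself does not supply a proof of \cref{tdual}; it simply attributes the result to Rogers \cite{rogers2002monte} (and Davis--Karatzas \cite{davis1994deterministic}) and proceeds to use it, so there is nothing to compare against beyond noting that your argument is exactly the one the cited references give.
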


We call the right hand side of \eqref{gendual} the $dual$ of the optimal stopping problem. In particular, the right hand side of \eqref{gendual} is a "generalised stochastic control problem of Girsanov type", where a controller is allowed to choose a martingale from $\mathcal M_{0,UI}^x$, $x\in E$. Note that an optimal martingale for the dual is $M^*$, the martingale appearing in the Doob-Meyer decomposition of $S$, while any other martingale in $\mathcal{M}_{0,UI}^x$ gives an upper bound of $V(x)$. We already showed that $M^*=M^V$, which means that, when solving the dual problem, one can search only over martingales of the form $M^f$, for $f\in\mathbb{D}(\mathcal{L})$, or equivalently over the functions $f\in\mathbb{D}(\mathcal{L})$. We can further define $\mathcal{D}_{\mathcal{M}_{0,UI}}\subset\mathbb{D}(\mathcal{L})$ by
\begin{equation*}
\mathcal{D}_{\mathcal{M}_{0,UI}}:=\{f\in\mathbb{D}(\mathcal{L}): f\geq g,f\textrm{ is superharmonic, } M^f\in\mathcal{M}_{0,UI}\}.
\end{equation*}
To conclude that $V\in\mathcal{D}_{\mathcal{M}_{0,UI}}$ we need to show that $V$ is superharmonic, i.e. for all stopping times $\sigma\in\mathcal{T}^{0,T}$ and all $x\in E$, $\mathbb{E}_x[V(X_\sigma)]\leq V(x)$. But this follows immediately from the Optional Sampling theorem, since $S=V(X)$ is a uniformly integrable supermartingale. Hence, as expected, we can restrict our search for the best minimising martingale to the set $\mathcal{D}_{\mathcal{M}_{0,UI}}$.

\begin{theorem}\label{tmarkov}
The dual problem, i.e. the right hand side of \eqref{gendual}, is a stochastic control problem for a controlled Markov process when $G=g(X)$ and the assumptions of \cref{pvdom} hold. 
\end{theorem}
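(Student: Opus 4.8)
The plan is to exhibit an augmented Markov process, controlled through the choice of the dominating function, for which the dual cost on the right-hand side of \eqref{gendual} is a standard expected-terminal-reward functional. First I would invoke the reduction already set up before the statement: since Assumptions \ref{ass:special}, \ref{ass:sup} and \ref{ass:dom} hold, \cref{pvdom} gives $V\in\mathbb{D}(\mathcal{L})$, and we have seen that $M^*=M^V$ with $V\in\mathcal{D}_{\mathcal{M}_{0,UI}}$. Hence the optimal martingale lies in the feedback class $\mathcal{D}_{\mathcal{M}_{0,UI}}\subset\mathbb{D}(\mathcal{L})$, and the infimum in \eqref{gendual} may be restricted to this class; the \emph{control} is then the function $f$ itself, and the role of \cref{pvdom} is precisely to guarantee that the optimiser belongs to this feedback family.

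For each admissible $f$, the defining property \eqref{gen} of the martingale generator yields the representation
$$
g(X_t)-M^f_t=(g-f)(X_t)+f(x)+\int_0^t\mathcal{L}f(X_s)\,ds,\qquad 0\le t\le T,
$$
so that the integrand of the pathwise supremum is an additive functional of $X$ driven by $f$. I would then enlarge the state to render the running supremum Markovian: set $I^f_t=\int_0^t\mathcal{L}f(X_s)\,ds$, note that $g(X_t)-M^f_t$ is a function of $(X_t,I^f_t)$, and introduce
$$
Y^f_t=\sup_{0\le s\le t}\bigl(g(X_s)-M^f_s\bigr),\qquad Z^f_t=(X_t,I^f_t,Y^f_t).
$$
Because we work on the canonical path space with the usual shift $\theta$, both $I^f$ and $Y^f$ are adapted path functionals satisfying the compatibility condition \eqref{func}, so the state-space expansion discussed after \cref{thm:VisS}, together with \cref{Fell}, shows that $Z^f$ is Markov under each admissible $f$.

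The control $f$ enters the dynamics of $Z^f$ genuinely---the drift rate of the $I^f$-coordinate is $\mathcal{L}f(X_t)$ and the martingale driving $g(X)-M^f$, and hence the increments of $Y^f$, is $M^{g-f}=M^g-M^f$---so $f$ appears in the generator of $Z^f$ and $Z^f$ is a controlled Markov process (of the Girsanov type anticipated above, with the $X$-coordinate uncontrolled). Finally, since $Y^f_0=g(x)$ and $Y^f_T=\sup_{0\le t\le T}(g(X_t)-M^f_t)$, the right-hand side of \eqref{gendual} becomes
$$
V(x)=\inf_{f\in\mathcal{D}_{\mathcal{M}_{0,UI}}}\mathbb{E}_x\bigl[Y^f_T\bigr],
$$
the expected terminal value of a coordinate of $Z^f$ started at $(x,0,g(x))$, which is exactly a stochastic control problem for a controlled Markov process. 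The main obstacle I anticipate is the enlargement step: verifying the compatibility condition \eqref{func} for the pair $(I^f,Y^f)$ and confirming that adjoining the running maximum preserves the (strong) Markov property uniformly over the admissible feedback controls, so that \cref{Fell} may legitimately be invoked for every $f$; pinning down exactly how $f$ acts on the generator of $Z^f$ (and thereby describing the admissible control set) is the substantive part, whereas the reduction to the feedback class and the final rewriting of the cost are immediate from the material preceding the statement.
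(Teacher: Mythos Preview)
Your proposal is correct and follows essentially the same approach as the paper: you restrict the infimum to $\mathcal{D}_{\mathcal{M}_{0,UI}}$, rewrite $g(X_t)-M^f_t$ as a function of $(X_t,\int_0^t\mathcal{L}f(X_s)\,ds)$, augment the state by the running integral and the running supremum, and invoke \cref{Fell} to obtain a controlled Markov triple whose terminal coordinate recovers the dual cost. The only differences are notational---the paper calls your $I^f$, $Y^f$, $Z^f$ respectively $Y^f$, $Z^f$, $(X,Y^f,Z^f)$---and the paper writes the control problem with a general initial condition $(x,y,z)$ before specialising to $(x,0,g(x))$.
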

\begin{proof}
 For any $f\in\mathcal{D}_{\mathcal{M}^x_{0,UI}}$, $x\in E$ and $y,z\in\mathbb{R}$, define processes $Y^f$ and $Z^f$ via
\begin{align*}
Y^f_t&:=y+\int^t_0\mathcal{L}f(X_s)ds,\quad0\leq t\leq T,\\
Z^f_{s,t}&:=\sup_{s\leq r \leq t}\Big(f(x)+g(X_r)-f(X_r)+Y^f_r\Big),\quad 0\leq s\leq t\leq T,
\end{align*}
and to allow arbitrary starting positions, set $Z^f_{t}=Z^f_{0,t}\vee z$, for $z\geq g(x)+y$. Note that, for any $f\in\mathbb{D}(\mathcal{L})$, $Y^f$ is an additive functional of $X$. \cref{Fell} implies that
if $f\in\mathcal{D}_{\mathcal{M}_{0,UI}}$ then $(X,Y^f,Z^f)$ is a Markov process.

Define $\hat{V}:E\times\mathbb{R}^2\to\mathbb{R}$ by
\begin{equation*}
\hat{V}(x,y,z)=\inf_{f\in\mathcal{D}_{\mathcal{M}^x_{0,UI}}}\mathbb{E}_{x,y,z}[Z^f_{T}],\quad (x,y,z)\in E\times\mathbb{R}\times\mathbb{R}.
\end{equation*} 
It is clear that this is a stochastic control problem for the controlled Markov process $(X,Y^f,Z^f)$, where the admissible controls are functions in $\mathcal{D}_{\mathcal{M}_{0,UI}}$. Moreover, since $V\in\mathcal{D}_{\mathcal{M}_{0,UI}}$, by virtue of \cref{tdual}, and adjusting initial conditions as necessary, we have
\begin{equation*}
V(x)=\hat{V}(x,0,g(x))=\mathbb{E}_{x,0,g(x)}[Z^V_{T}],\quad x\in E.
\end{equation*}
{\color{white}a}
\end{proof}

\subsection{Some remarks on the \textit{smooth pasting} condition}\label{smoothpasting}
We will now discuss the implications of \cref{pvdom} for the smoothness of the value function $V(\cdot)$ of the optimal stopping problem given in \eqref{markov}.
\begin{remark}
While in \cref{tpasting} (resp. \cref{tKilledpasting}) we essentially recover (a small improvement of) Theorem 2.3 in Peskir \cite{peskir2007principle} (resp. Theorem 2.3 in Samee \cite{samee2010fit}), the novelty is that we prove the results by means of stochastic calculus, as opposed to the analytic approach in \cite{peskir2007principle} (resp. \cite{samee2010fit}).
\end{remark}

In addition to Assumptions \ref{ass:sup} and \ref{ass:dom}, we now assume that $X$ is a one-dimensional diffusion in the It\^{o}-McKean \cite{ito1965diff} sense, so that $X$ is a strong Markov process with continuous sample paths. We also assume that the state space $E\subset\mathbb{R}$ is an interval with endpoints $-\infty\leq a\leq b\leq+\infty$. Nnote that the diffusion assumption implies \cref{ass:special}. Finally, we assume that $X$ is $regular$: for any $x,y\in$ int$(E)$, $\mathbb{P}_x[\tau_y<\infty]>0$, where $\tau_y=\min\{t\geq0:X_t=y\}$. Let $\alpha\geq0$ be fixed;  $\alpha$ corresponds to a killing rate of the sample paths of $X$.

 \paragraph{The case without killing: $\alpha=0$}Let $s(\cdot)$ denote a scale function of $X$, i.e. a continuous, strictly increasing function on $E$ such that for $l$, $r$, $x\in E$, with $a\leq l<x<r\leq b$, we have 
\begin{equation}\label{scale}
\mathbb{P}_x(\tau_r<\tau_l)=\frac{s(x)-s(l)}{s(r)-s(l)},
\end{equation}
see Revuz and Yor \cite{revuz2013continuous}, Proposition 3.2 (p.301) for the proof of existence and properties of such a function.

From \eqref{scale}, using regularity of $X$ and that $V(X)$ is a supermartingale of class (D) we have that $V(\cdot)$ is $s$-concave:
\begin{align}
V(x)\geq V(l)\frac{s(r)-s(x)}{s(r)-s(l)}+V(r)\frac{s(x)-s(l)}{s(r)-s(l)},\quad x\in[l,r].
\end{align}

\begin{theorem}\label{tpasting}
Suppose the assumptions of \cref{pvdom} are satisfied, so that $V\in\mathbb{D}(\mathcal{L})$. Further assume that $X$ is a regular, strong Markov process with continuous sample paths.  Let $Y=s(X)$, where $s(\cdot)$ is a scale function of $X$. 

\begin{enumerate}
\item Assume that for each $y\in [s(a),s(b)]$, the local time of $Y$ at $y$, $L^y$, is singular with respect to Lebesgue measure. Then, if $s\in{C}^1$, $V(\cdot)$, given by \eqref{markov}, belongs to ${C}^1$.

\item Assume that $([Y,Y]_t)_{t\geq0}$ is, as a measure, absolutely continuous with respect to Lebesgue measure. If $s^{\prime}(\cdot)$ is absolutely continuous, then $V\in C^1$ and $V^\prime(\cdot)$ is also absolutely continuous. 
\end{enumerate}
\end{theorem}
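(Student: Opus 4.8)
The plan is to prove the smooth-fit result (Theorem \ref{tpasting}) by transferring everything to natural scale, exploiting that in scale the value function is concave, and then combining the decomposition of $V(X)$ from \cref{pvdom} with the structure of the local time of the martingale $Y=s(X)$. The key observation is that, after passing to natural scale, $V$ becomes $s$-concave (as the excerpt already records), so the only possible failure of $C^1$-smoothness is a downward kink, i.e. a point where the left derivative strictly exceeds the right derivative. I would therefore aim to show that such a kink forces a contradiction with the absolute-continuity conclusion of \cref{tabscont}.

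\emph{Step 1 (reduce to the function $W=V\circ s^{-1}$ on natural scale).} Write $Y=s(X)$, a continuous local martingale on natural scale, and set $W(y)=V(s^{-1}(y))$, so that $V(X_t)=W(Y_t)$. Since $V$ is $s$-concave, $W$ is concave, hence has left and right derivatives everywhere and is differentiable off a countable set; $W\in C^1$ iff these one-sided derivatives agree at every point. For part (2), the further claim that $V'$ is absolutely continuous translates, via $V'=(W'\circ s)\,s'$ and the hypothesis that $s'$ is absolutely continuous, into showing $W'$ is locally Lipschitz/absolutely continuous, which for a concave function is equivalent to its distributional second derivative having no atoms.

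\emph{Step 2 (Itô-Tanaka / generalised Itô for the concave function $W$).} Apply the Itô-Tanaka (or Meyer-Itô) formula to $W(Y)$: because $W$ is concave,
\begin{equation*}
W(Y_t)=W(Y_0)+\int_0^t W'_-(Y_s)\,dY_s-\tfrac12\int_{\mathbb{R}} L^y_t\,\nu(dy),
\end{equation*}
where $\nu=-W''$ is the (nonnegative) second-derivative measure and $L^y$ is the local time of the continuous local martingale $Y$ at level $y$. The finite-variation part of $W(Y)$ is thus exactly $-\tfrac12\int_{\mathbb R} L^y_t\,\nu(dy)$. On the other hand, \cref{pvdom} (equation \eqref{dec2}) tells us this same finite-variation part equals $-\int_0^t\mu_s\,\mathcal{L}g(X_s)^-\,ds$, which is absolutely continuous with respect to Lebesgue time. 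Equating the two gives the master identity
\begin{equation*}
\int_{\mathbb R} L^y_t\,\nu(dy)\ll dt \quad\text{a.s.}
\end{equation*}

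\emph{Step 3 (use the local-time hypotheses to rule out kinks).} A downward kink of $W$ at a point $y_0$ means $\nu(\{y_0\})=c>0$, contributing $c\,L^{y_0}_t$ to $\int L^y_t\,\nu(dy)$. Under the hypothesis of part (1)—that $L^{y_0}$ is singular with respect to Lebesgue measure—this atomic contribution is a singular increasing process, which cannot be absorbed into the absolutely continuous finite-variation part from \eqref{dec2} unless $L^{y_0}\equiv 0$; but regularity of $X$ (hence of $Y$ on the interior) forces $L^{y_0}$ to be genuinely increasing at levels visited by $Y$, so $c=0$, i.e. $W'$ has no jump and $V\in C^1$. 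For part (2), the hypothesis that $[Y,Y]\ll dt$ together with the occupation-time formula $\int_0^t h(Y_s)\,d[Y,Y]_s=\int_{\mathbb R} h(y)L^y_t\,dy$ shows the map $y\mapsto L^y_t$ carries the Lebesgue-absolute-continuity; combined with the master identity one deduces $\nu\ll dy$, i.e. $W'$ is absolutely continuous, which back-transforms to $V'$ being absolutely continuous.

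\emph{The hard part} will be Step 3: rigorously arguing that a singular (atomic-in-$y$) local-time term cannot be masked by the absolutely-continuous term, and in particular showing that at any candidate kink level $y_0$ the local time $L^{y_0}$ is \emph{not} identically zero on a set of positive probability. This requires the regularity of $X$ (every interior point is reached) to guarantee the diffusion genuinely accumulates local time at $y_0$, and some care about whether $y_0$ lies in the stopping region $\mathcal{S}$ or the go region $\mathcal{S}^c$—on $\mathcal{S}^c$ the finite-variation part vanishes altogether (as noted in the remark after \cref{pvdom}), so any kink must sit in $\mathcal{S}$, where the absolute-continuity of the compensator from \cref{tabscont} does the work. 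Handling the boundary levels $s(a),s(b)$ and the measurability of the exceptional null sets uniformly in the starting point $x$ is the remaining technical nuisance.
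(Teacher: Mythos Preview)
Your proposal is correct and follows essentially the same route as the paper: pass to natural scale, write $W=V\circ s^{-1}$, compare the It\^o--Tanaka decomposition of $W(Y)$ (whose finite-variation part is $-\int L^y_t\,\nu(dy)$ with $\nu=-W''$) against the absolutely continuous compensator coming from \cref{pvdom}, and conclude that $\nu$ cannot have atoms (part 1) or must be absolutely continuous (part 2). One small slip: in Step~1 you write that $W'$ absolutely continuous is ``equivalent to its distributional second derivative having no atoms''---that only gives $W\in C^1$; you need $\nu\ll dy$, which is what you in fact argue in Step~3.
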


\begin{remark}\label{useful}
If $\mathcal G$ is the filtration of a  Brownian motion, $B$, then $Y=s(X)$ is a stochastic integral with respect to $B$ (a consequence of martingale representation):
\begin{equation}
Y_t=Y_0+\int_0^t\sigma_s dB_s.
\end{equation}
Moreover,  Proposition 3.56 in \c{C}inlar et al. \cite{cinlar1980} ensures that $\sigma_t=\sigma(Y_t)$ for a suitably measurable function $\sigma$ and 
$$
[Y,Y]_t=\int_0^t \sigma^2(Y_s)ds.
$$

 In this case, both, the singularity of the local time of Y and absolute continuity of $[Y,Y]$ (with respect to Lebesgue measure), are inherited from those of Brownian motion. On the other hand, if X is a regular diffusion (not necessarily a solution to an SDE driven by a Brownian motion), absolute continuity of $[Y,Y]$ still holds, if the speed measure of $X$ is absolutely continuous (with respect to Lebesgue measure).
\end{remark}
\begin{proof}

Note that $Y=s(X)$ is a Markov process, and let $\mathcal{K}$ denote its martingale generator. Moreover, $V(x)=W(s(x))$ (see \cref{lemW} and the following remark), where, on the interval $[s(a),s(b)]$, $W(\cdot)$ is the smallest nonnegative concave majorant of the function $\hat{g}(y)= g\circ s^{-1}(y)$. Then, since $V\in\mathbb{D}(\mathcal{L})$,
\begin{equation}\label{rep2}
V(X_t)=V(x)+M^V_t+\int^t_0\mathcal{L}V(X_u)du,\quad\textit{ }0\leq t\leq T,\nonumber\\
\end{equation}
and thus
\begin{align*}
W(Y_t)&=W(y)+M^V_t+\int^t_0(\mathcal{L}V)\circ s^{-1}(Y_u)du,\quad\textit{ }0\leq t\leq T.
\end{align*}
Therefore, $W \in\mathbb{D}(\mathcal{K})$, since
\begin{equation}\label{w1}
W(Y_t)=W(y)+M^V_t+\int^t_0\mathcal{K}W(Y_u)du,
\end{equation}
for $y\in[s(a),s(b)]$, $0\leq t\leq T$, with $\mathcal{K}W=\mathcal{L}V\circ s^{-1}\leq 0$.

On the other hand, using the generalised It\^{o} formula for concave/convex functions (see e.g. Revuz and Yor \cite{revuz2013continuous}, Theorem 1.5 p.223) we have
\begin{equation*}
W(Y_t)=W(y)+\int^t_0W^{'}_{+}(Y_u)dY_u-\int^{s(b)}_{s(a)}L_t^z\nu(dz),
\end{equation*}
for $y\in[s(a),s(b)]$, $0\leq t\leq T$, where $L^z_t$ is the local time of $Y_t$ at $z$, and $\nu$ is a non-negative $\sigma$-finite measure corresponding to the second derivative of $-W$ in the sense of distributions. Then, by the uniqueness of the decomposition of a special semimartingale, we have that, for $t\in[0,T]$,
\begin{equation}\label{eqintegrals}
-\int^t_0\mathcal{K}W(Y_u)du=\int^{s(b)}_{s(a)}L_t^z\nu(dz)\quad\textrm{a.s.}
\end{equation}

In order to prove the first claim, using the Lebesgue decomposition theorem, split $\nu$ into $\nu=\nu_c+\nu_s$, where $\nu_c$ and $\nu_s$ are measures, absolutely continuous and singular (with respect to Lebesgue measure), respectively, so that 
\begin{align}\label{w2}
\int^{s(b)}_{s(a)}L_t^z\nu(dz)&=\int^{s(b)}_{s(a)}L_t^z\nu'_c(z)dz+\int^{s(b)}_{s(a)}L_t^z\nu_s(dz)\quad\textrm{a.s.}
\end{align}
 Now suppose that $\nu_s(\{z_0\})>0$ for some $z_0\in(s(a),s(b))$. Then, using \eqref{eqintegrals} and \eqref{w2}, we have 
\begin{equation}\label{eq2int}
-\int^t_0\mathcal{K}W(Y_u)du=\int^{t}_{0}L^z_t\nu^\prime_c(z)dz+
L^{z_0}_t\nu_s(\{z_0\})+\int^{s(b)}_{s(a)}\mathbbm{1}_{\{z\neq z_0\}}L_t^{z}\nu_s(dz)\quad\textrm{a.s.}
\end{equation}
Since $L_t^{z_0}$ is positive with positive probability and, by assumption, $L^y$, $y\in[s(a),s(b)]$, is singular with respect to Lebesgue measure, the right hand side of \eqref{eq2int} contradicts absolute continuity of the left hand side. Therefore, $\nu_s(\{z_0\})=0$, and since $z_0$ was arbitrary, we have that $\nu_s$ does not charge points (so that $\nu_s$ is singular continuous with respect to Lebesgue measure). It follows that $W\in C^1$. Since $s\in{C}^1$ by assumption, we conclude that $V\in{C}^1$.

We now prove the second claim. By assumption, $[Y,Y]$ is absolutely continuous with respect to Lebesgue measure (on the time axis). 
Invoking Proposition 3.56 in \c{C}inlar et al. \cite{cinlar1980} again, we have that
$$
[Y,Y]_t=\int\sigma^2(Y_u)du
$$
(as in Remark \ref{useful}). 
 A time-change argument allows us to conclude that $Y$ is a time-change of a BM and that we may neglect the set $\{t:\sigma^2(Y_t)=0\}$ in the representation (\ref{rep2}). Thus 
 $$
 W(Y_t)=W(Y_0)+\int_0^t 1_{N^c}(Y_u)dM^V_u+\int_0^t 1_{N^c}(Y_u)\mathcal{K}W(Y_u)du
 $$
 where  $N$ is the zero set of $\sigma$. 
Then, using the occupation time formula (see, for example, Revuz and Yor \cite{revuz2013continuous}, Theorem 1.5 p.223) we have that
\begin{equation*}
-\int^t_0\mathcal{K}W(Y_u)du=\int^t_0f(Y_u)d[Y,Y]_u=\int^{s(b)}_{s(b)}f(z)L_t^zdz\quad\textrm{a.s.,}
\end{equation*}
where $f:[s(a),s(b)]\to\mathbb{R}$ is given by $f:y\mapsto - \frac{\mathcal{K}W}{\sigma}1_{N^c}(y)$. Now observe that, for $0\leq r\leq t\leq T$, $\eta([r,t]):=\int^{s(b)}_{s(a)}f(z)\Big(L_t^z-L_r^z\Big)dz$ and $\pi([r,t]):=\int^{s(b)}_{s(a)}\Big(L_t^z-L_r^z\Big)\nu(dz)$ define measures on the time axis, which, by virtue of \eqref{eqintegrals}, are equal (and thus both are absolutely continuous with respect to Lebesgue measure). Now define $T^{\underline l,\bar l}:=\{t:Y_t\in[\underline l,\bar l]\}$, $s(a)\leq \underline l\leq \bar l\leq s(b)$. Then the restrictions of $\eta$ and $\pi$ to $T^{\underline l,\bar l}$, $\eta\lvert_{T^{\underline l,\bar l}}$ and $\pi\lvert_{T^{\underline l,\bar l}}$, are also equal. Moreover, since $Y$ is a local martingale, it is also a semimartingale. Therefore, for every $0\leq t\leq T$, $L_t^z$ is carried by the set $\{t:Y_t=z\}$ (see Protter \cite{protter2005stochastic}, Theorem 69 p.217). Hence, for each $t\in[0,T]$,
\begin{equation}\label{eq3int}
\eta\lvert_{T^{\underline l,\bar l}}([0,t])=\int_{\underline l}^{\bar l}L^z_tf(z)dz=\int_{\underline l}^{\bar l}L^z_t\nu(dz)=\pi\lvert_{T^{\underline l,\bar l}}([0,t]),
\end{equation}
and, since $\underline l$ and $\bar l$ are arbitrary, the left and right hand sides of \eqref{eq3int} define measures on $[s(a),s(b)]\subseteq\mathbb{R}$, which are equal. It follows that $f(z)dz=\nu(dz)$, and thus, by uniqueness of the Lebesgue decomposition of $\sigma$-finite measures, $\nu_s=0$. This proves that $W\in C^1$ and $W^\prime(\cdot)$ is absolutely continuous on $[s(a),s(b)]$ with Radon-Nykodym derivative $f$. Since the product and composition of absolutely continuous functions are absolutely continuous, we conclude that $V^\prime(\cdot)$ is absolutely continuous (since $s^\prime(\cdot)$ is, by assumption).
 \end{proof}
\begin{remark}
We note that for a smooth fit principle to hold, it is not necessary that $s\in{C}^1$. Given that all the other conditions of \cref{tpasting} hold, it is sufficient that $s(\cdot)$ is differentiable at the boundary of the continuation region. On the other hand, if $g\in\mathbb{D}(\mathcal{L})$, $V\in C^1$, even if $g\notin C^1$.

Moreover, since $V=g$ on the stopping region, \cref{tpasting} tells us that $g\in C^1$ on the interior of the stopping region. However, the question whether this stems already from the assumption that $g\in\mathbb{D}(\mathcal{L})$ is more subtle. For example, if $g\in\mathbb{D}(\mathcal{L})$ and $g$ is a difference of two convex functions, then by the generalised It\^{o} formula and the local time argument (similarly to the proof of \cref{tpasting}) we could conclude that $g\in C^1$ on the whole state space $E$.
\end{remark}

\paragraph{Case with killing: $\alpha>0$} We now generalise the results of the \cref{tpasting} in the presence of a non-trivial killing rate. Consider the following optimal stopping problem
\begin{equation}\label{ValueKill}
V(x)=\sup_{\tau\in\mathcal{T}^{0,T}}\mathbb{E}_x[e^{-\alpha\tau}g(X_\tau)],\quad x\in E.
\end{equation}
Note that, since $\alpha>0$, using the regularity of $X$ together with the supermartingale property of $V(X)$ we have that
\begin{align}\label{Vconcave}
V(x)\geq V(l)\mathbb{E}_x[e^{-\alpha\tau_l}1_{\tau_l<\tau_r}]+V(r)\mathbb{E}_x[e^{-\alpha\tau_r}1_{\tau_r<\tau_l}],\quad x\in[l,r]\subseteq E.
\end{align}Define increasing and decreasing functions $\psi,\phi:E\to\mathbb{R}$, respectively, by
\begin{align}\label{psi_phi}
  \psi(x) = \begin{cases}
      \mathbb{E}_x[e^{-\alpha\tau_c}], & \text{if $x\leq c$} \\
      1/\mathbb{E}_c[e^{-\alpha\tau_x}] ,& \text{if $x>c$}
    \end{cases}\quad
    \phi(x) = \begin{cases}
     1/\mathbb{E}_c[e^{-\alpha\tau_x}] , & \text{if $x\leq c$} \\
      \mathbb{E}_x[e^{-\alpha\tau_c}] ,& \text{if $x>c$}
    \end{cases}
\end{align}
where $c\in E$ is arbitrary. Then, $(\Psi_t)_{0\leq t\leq T}$ and $(\Phi_t)_{0\leq t\leq T}$, given by
\begin{equation*}\label{eq:PsiPhi}
\Psi_t=e^{-\alpha t}\psi(X_t),\quad\Phi_t=e^{-\alpha t}\phi(X_t),\quad0\leq t\leq T,
\end{equation*}
respectively, are local martingales (and also supermartingales, since $\psi,\phi$ are non-negative); see Dynkin \cite{dynkin1965markov} and It\^{o} and McKean \cite{ito1965diff}.

Let $p_1,p_2:[l,r]\to[0,1]$ (where $[l,r]\subseteq E$) be given by
\begin{equation*}
p_1(x)=\mathbb{E}_x[e^{-\alpha\tau_l}1_{\tau_l<\tau_r}],\quad p_2(x)=\mathbb{E}_x[e^{-\alpha\tau_r}1_{\tau_r<\tau_l}].
\end{equation*}
Continuity of paths of $X$ implies that $p_i(\cdot),i=1,2$, are both continuous (the proof of continuity of the scale function in \eqref{scale} can be adapted for a killed process). In terms of the functions $\psi(\cdot)$, $\phi(\cdot)$ of \eqref{psi_phi}, using appropriate boundary conditions, one calculates
\begin{equation}\label{p1_p2}
p_1(x)=\frac{\psi(x)\phi(r)-\psi(r)\phi(x)}{\psi(l)\phi(r)-\psi(r)\phi(l)},\quad p_2(x)=\frac{\psi(l)\phi(x)-\psi(x)\phi(l)}{\psi(l)\phi(r)-\psi(r)\phi(l)},\quad x\in[l,r].
\end{equation}
Let $\tilde{s}:E\to\mathbb{R}_+$ be the continuous increasing function defined by $\tilde{s}(x)=\psi(x)/\phi(x)$. Substituting \eqref{p1_p2} into \eqref{Vconcave} and then dividing both sides by $\phi(x)$ we get
\begin{equation*}
\frac{V(x)}{\phi(x)}\geq\frac{V(l)}{\phi(l)}\cdot\frac{\tilde{s}(r)-\tilde{s}(x)}{\tilde{s}(r)-\tilde{s}(l)}+\frac{V(r)}{\phi(r)}\cdot\frac{\tilde{s}(x)-\tilde{s}(l)}{\tilde{s}(r)-\tilde{s}(l)},\quad x\in[l,r]\subseteq E,
\end{equation*}
so that $V(\cdot)/\phi(\cdot)$ is $\tilde{s}$-concave.

Recall that \cref{Vconcave} essentially follows from $V(\cdot)$ being $\alpha$-superharmonic, so that it satisfies $\mathbb{E}_x[e^{-\alpha\tau}V(X_\tau)]\leq V(x)$ for $x\in E$ and any stopping time $\tau$. Since $\Phi$ and $\Psi$ are local martingales, it follows that the converse is also true, i.e. given a measurable function $f:E\to\mathbb{R}$, $f(\cdot)/\phi(\cdot)$ is $\tilde{s}$-concave if and only if $f(\cdot)$ is $\alpha$-superharmonic (Dayanik and Karatzas \cite{dayanik2003optimal}, Proposition 4.1). This shows that a value function $V(\cdot)$ is the minimal majorant of $g(\cdot)$ such that $V(\cdot)/\phi(\cdot)$ is $\tilde{s}$-concave.

\begin{lemma}\label{lemW}
Suppose $[l,r]\subseteq E$ and let $W(\cdot)$ be the smallest nonnegative concave majorant of $\tilde{g}:=(g/\phi)\circ \tilde{s}^{-1}$ on $[\tilde{s}(l),\tilde{s}(r)]$, where $\tilde{s}^{-1}$ is the inverse of $\tilde{s}$. Then $V(x)=\phi(x)W(\tilde{s}(x))$ on $[l,r]$.
\end{lemma}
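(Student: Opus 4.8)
The plan is to transport the problem to the concave world through the order-isomorphism induced by $\phi$ and $\tilde s$, and then to match the two extremal characterisations. For a function $u$ on $[\tilde s(l),\tilde s(r)]$ I would set $\Xi(u):=\phi\cdot(u\circ\tilde s)$; since $\phi>0$ and $\tilde s$ is a continuous, strictly increasing bijection onto its image, $\Xi$ is an order-preserving bijection from functions on $[\tilde s(l),\tilde s(r)]$ onto functions on $[l,r]$, with inverse $f\mapsto(f/\phi)\circ\tilde s^{-1}$. First I would record three elementary properties: (i) $u$ is concave if and only if $\Xi(u)/\phi=u\circ\tilde s$ is $\tilde s$-concave, which is merely the definition of $\tilde s$-concavity; (ii) $u\geq\tilde g$ on $[\tilde s(l),\tilde s(r)]$ if and only if $\Xi(u)\geq g$ on $[l,r]$, because $\tilde g\circ\tilde s=g/\phi$ and $\phi>0$; and (iii) $\Xi$ preserves the pointwise order and non-negativity. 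Thus $\Xi$ maps the nonnegative concave majorants of $\tilde g$ bijectively and monotonically onto the nonnegative $\tilde s$-concave majorants of $g$ on $[l,r]$.

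For the inequality $\phi(x)W(\tilde s(x))\leq V(x)$ I would argue directly. By the discussion preceding the lemma, $V/\phi$ is $\tilde s$-concave, and since $V\geq g$ we have $V/\phi\geq g/\phi$; hence $(V/\phi)\circ\tilde s^{-1}$ is a concave majorant of $\tilde g$ on $[\tilde s(l),\tilde s(r)]$. As $W$ is by definition the \emph{smallest} such majorant, $W\leq(V/\phi)\circ\tilde s^{-1}$, and applying $\Xi$ (equivalently, composing with $\tilde s$ and multiplying by $\phi>0$) yields $\phi\cdot(W\circ\tilde s)\leq V$ on $[l,r]$.

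For the reverse inequality $V\leq\phi\cdot(W\circ\tilde s)$ I would invoke the extremal characterisation recalled in the text: $V$ is the minimal majorant of $g$ whose ratio with $\phi$ is $\tilde s$-concave (equivalently, the minimal $\alpha$-superharmonic majorant, by Dayanik and Karatzas). The candidate $U:=\Xi(W)=\phi\cdot(W\circ\tilde s)$ satisfies $U\geq g$ by (ii), and $U/\phi=W\circ\tilde s$ is $\tilde s$-concave by (i), so $U$ is an admissible competitor; minimality of $V$ then gives $V\leq U$. Combining the two inequalities yields $V(x)=\phi(x)W(\tilde s(x))$ on $[l,r]$.

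The main obstacle is reconciling the genuinely \emph{local} construction of $W$ (the concave envelope on the fixed interval $[\tilde s(l),\tilde s(r)]$) with the global nature of $V$: the restriction of a global concave majorant to a subinterval need not coincide with the subinterval's concave envelope, so the minimality argument for the reverse inequality must be made compatible with the boundary of $[l,r]$. The key technical fact I would exploit is that the smallest concave majorant agrees with $\tilde g$ at the endpoints $\tilde s(l)$ and $\tilde s(r)$, since an endpoint of a compact interval admits no nontrivial representation $\lambda u+(1-\lambda)v$ with $u,v$ in the interval; consequently $U(l)=g(l)$ and $U(r)=g(r)$. This endpoint matching is what lets me run the superharmonicity comparison through the exit time $\tau_{\{l,r\}}=\inf\{t:X_t\in\{l,r\}\}$, using the strong Markov property and optional sampling of the local martingales $\Phi$ and $\Psi$, thereby confirming that $U$ is a legitimate competitor for $V$ on $[l,r]$ and closing the argument.
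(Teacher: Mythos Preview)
Your two-sided minimality argument is exactly the paper's: show that $(V/\phi)\circ\tilde s^{-1}$ is a concave majorant of $\tilde g$ (whence $W\leq(V/\phi)\circ\tilde s^{-1}$), and show that $\phi\cdot(W\circ\tilde s)$ is an admissible competitor for $V$ (whence $V\leq\phi\cdot(W\circ\tilde s)$). The paper's proof does nothing beyond this, and your $\Xi$-formalism is just a clean repackaging of the same two steps.

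Where you go further is your last paragraph, and there you have correctly spotted a point the paper passes over in silence: the minimality of $V$ used for the inequality $V\leq U$ is the \emph{global} characterisation on $E$, whereas $U=\phi\cdot(W\circ\tilde s)$ lives only on $[l,r]$. Your observation that the concave envelope on a compact interval touches its obstacle at the endpoints is correct, but it cannot close the gap, because the lemma as literally stated is \emph{false} for proper subintervals. Take $E=[0,2]$, $\alpha=0$, $X$ Brownian motion absorbed at $\{0,2\}$ (so $\phi\equiv1$ and $\tilde s(x)=x$), and $g(x)=x^2/4$. Then $V(x)=x/2$ on $E$ (stop at the exit time), but on $[l,r]=[0,1]$ the concave envelope of $y\mapsto y^2/4$ is the chord $W(y)=y/4$, so $\phi(x)W(\tilde s(x))=x/4\neq V(x)$ on $(0,1]$. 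Here $V$ on $(0,1)$ is driven by $g(2)=1$, which the local envelope on $[0,1]$ simply cannot see; no optional-sampling argument at $\tau_{\{0,1\}}$ will manufacture the missing factor of two.

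The resolution is that the lemma is only intended, and is only applied in the paper, with $[l,r]=[a,b]=E$: see the proof of Theorem~\ref{tpasting}, where the envelope is taken on $[s(a),s(b)]$. With that reading the local/global distinction disappears and both your core argument and the paper's are correct and identical. So your proof is right for the case that matters; the repair you sketch for arbitrary $[l,r]\subsetneq E$ is aimed at a statement that cannot be repaired.
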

\begin{proof}
Define $\hat{V}(x)=\phi(x)W(\tilde{s}(x))$ on $[l,r]$. Then, trivially, $\hat{V}(\cdot)$ majorizes $g(\cdot)$ and $\hat{V}(\cdot)/\phi(\cdot)$ is $\tilde{s}$-concave. Therefore $V(x)\leq\hat{V}(x)$ on $[l,r]$.

On the other hand, let $\hat{W}(y)=(V/\phi)(\tilde{s}^{-1}(y))$ on $[\tilde{s}(l),\tilde{s}(r)]$. Since $V(x)\geq g(x)$ and $(V/\phi)(\cdot)$ is $\tilde{s}$-concave on $[l,r]$, $\hat{W}(\cdot)$ is concave and majorizes $(g/\phi)\circ \tilde{s}^{-1}(\cdot)$ on $[\tilde{s}(l),\tilde{s}(r)]$. Hence, $W(y)\leq\hat{W}(y)$ on $[\tilde{s}(l),\tilde{s}(r)]$.

Finally, $(V/\phi)(x)\leq(\hat{V}/\phi)(x)=W(\tilde{s}(x))\leq\hat{W}(\tilde{s}(x))=(V/\phi)(x)$ on $[l,r]$.
\end{proof}
\begin{remark}
When $\alpha=0$, let $(\psi,\phi)=(s,1)$. Then \cref{lemW} is just Proposition 4.3. in Dayanik and Karatzas~\cite{dayanik2003optimal}.
\end{remark}

With the help of \cref{lemW} and using parallel arguments to those in the proof of \cref{tpasting} we can formulate sufficient conditions for $V$ to be in ${C}^1$ and have absolutely continuous derivative. 
\begin{theorem}\label{tKilledpasting}
Suppose the assumptions of \cref{pvdom} are satisfied, so that $V\in\mathbb{D}(\mathcal{L})$. Further assume that $X$ is a regular Markov process with continuous sample paths. Let $\psi(\cdot),\phi(\cdot)$ be as in \eqref{psi_phi} and consider the process $Y=\tilde{s}(X)$. 
\begin{enumerate}
\item Assume that, for each $y\in [\tilde{s}(a),\tilde{s}(b)]$, the local time of $Y$ at $y\in [\tilde{s}(a),\tilde{s}(b)]$, $\hat{L}^y$, is singular with respect to Lebesgue measure. Then if $\psi,\phi\in{C}^1$, $V(\cdot)$, given by \eqref{ValueKill}, belongs to ${C}^1$.

\item Assume that $[Y,Y]$ is, as a measure, absolutely continuous with respect to Lebesgue measure. If $\psi^\prime(\cdot),\phi^\prime(\cdot)$ are both absolutely continuous, then $V^\prime(\cdot)$ is aslo absolutely continuous.
\end{enumerate}
\end{theorem}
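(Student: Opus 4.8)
The plan is to mirror the proof of \cref{tpasting}, transferring everything to the process $Y=\tilde{s}(X)$ and working with the representation $V(x)=\phi(x)W(\tilde{s}(x))$ supplied by \cref{lemW}. First I would record that, since $V\in\mathbb{D}(\mathcal{L})$ by \cref{pvdom}, we have the semimartingale decomposition
\begin{equation*}
e^{-\alpha t}V(X_t)=V(x)+\tilde{M}_t+\int_0^t e^{-\alpha u}\big(\mathcal{L}V-\alpha V\big)(X_u)\,du,
\end{equation*}
where $\tilde{M}$ is a local martingale and $(\mathcal{L}V-\alpha V)\leq0$ by $\alpha$-superharmonicity. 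The key reduction is to push this through the change of variables $V=\phi\cdot(W\circ\tilde{s})$ and the (local) martingale property of $\Phi_t=e^{-\alpha t}\phi(X_t)$: since $e^{-\alpha t}V(X_t)=\Phi_t\,W(Y_t)$ and $\Phi$ is a positive local martingale, an integration-by-parts (or a change of measure by the local martingale $\Phi$, as in the struck-out passage) should exhibit $W(Y)$ as a special semimartingale whose finite-variation part is absolutely continuous with respect to Lebesgue measure on the time axis. Thus $W\in\mathbb{D}(\mathcal{K})$, where $\mathcal{K}$ is the martingale generator of $Y$, with $\mathcal{K}W\leq0$, exactly as in \eqref{w1}.

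Once $W$ is identified as an element of $\mathbb{D}(\mathcal{K})$ with $-\int_0^t\mathcal{K}W(Y_u)\,du$ absolutely continuous in $t$, the rest runs verbatim as in \cref{tpasting}. For the first claim, I would apply the generalised It\^{o} formula for concave functions to $W(Y)$, equate the two finite-variation parts by uniqueness of the special-semimartingale decomposition to get $-\int_0^t\mathcal{K}W(Y_u)\,du=\int_{\tilde{s}(a)}^{\tilde{s}(b)}\hat{L}_t^z\nu(dz)$, split $\nu=\nu_c+\nu_s$, and argue that a point mass $\nu_s(\{z_0\})>0$ would force a term $\hat{L}_t^{z_0}\nu_s(\{z_0\})$ that is singular in $t$ (because $\hat{L}^{z_0}$ is singular by hypothesis), contradicting the absolute continuity of the left-hand side; hence $\nu_s$ charges no points, $W\in C^1$, and since $\psi,\phi\in C^1$ the relation $V=\phi\cdot(W\circ\tilde{s})$ and $\tilde{s}=\psi/\phi\in C^1$ give $V\in C^1$. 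For the second claim, I would invoke the absolute continuity of $[Y,Y]$, write $[Y,Y]_t=\int_0^t\sigma^2(Y_u)\,du$ via Proposition 3.56 in \c{C}inlar et al.~\cite{cinlar1980}, time-change to Brownian motion, and use the occupation-time formula to obtain $-\int_0^t\mathcal{K}W(Y_u)\,du=\int_{\tilde{s}(a)}^{\tilde{s}(b)}f(z)\hat{L}_t^z\,dz$; comparing with the local-time integral against $\nu$ on each level set $\{Y\in[\underline l,\bar l]\}$ forces $\nu(dz)=f(z)\,dz$, so $\nu_s=0$, giving $W\in C^1$ with $W'$ absolutely continuous, and finally $V'=(\phi\cdot(W\circ\tilde{s}))'$ is absolutely continuous because products and compositions of absolutely continuous functions are absolutely continuous, using $\psi',\phi'$ absolutely continuous.

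The main obstacle I anticipate is the very first reduction: establishing cleanly that $W\in\mathbb{D}(\mathcal{K})$ with an \emph{absolutely continuous} compensator, because the factor $\phi$ (equivalently the discounting $e^{-\alpha t}$) no longer cancels as trivially as the scale function did in the $\alpha=0$ case. One must carefully separate the contribution of the local martingale $\Phi$ from the drift of $V(X)$ and verify that the cross terms arising in the integration-by-parts do not introduce any singular finite-variation component; here the fact that $\Psi,\Phi$ are themselves local martingales (so that $\phi,\psi$ are $\alpha$-harmonic and carry no drift) is what makes the drift of $W(Y)$ come solely from $(\mathcal{L}-\alpha)V\le0$ and hence be absolutely continuous. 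A secondary technical point is ensuring the integrability/localisation needed to treat $W(Y)$ as a genuine special semimartingale on $[0,T]$ so that the uniqueness of decomposition can be applied; this is handled exactly as in the unkilled case, since $g\in\mathbb{D}(\mathcal{L})$ together with \cref{ass:sup} and \cref{lhloc} places the relevant processes in $\mathcal{H}^1_{loc}$.
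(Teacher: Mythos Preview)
Your strategy is sound and close to the paper's, with one concrete slip: the claim that once $W\in\mathbb D(\mathcal K)$ is established ``the rest runs verbatim as in \cref{tpasting}'' is not quite right, because in the killed case $Y=\tilde s(X)$ is \emph{not} a local martingale. When you apply the It\^o--Tanaka formula to $W(Y)$, the finite-variation part is $\int_0^t W'_+(Y_s)\,dY^{FV}_s-\int\hat L_t^z\,\nu(dz)$, so equating compensators gives
\[
\int \hat L_t^z\,\nu(dz)\;=\;\int_0^t W'_+(Y_s)\,dY^{FV}_s\;-\;\int_0^t\mathcal K W(Y_u)\,du,
\]
not the clean identity you wrote. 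The extra term $\int W'_+(Y)\,dY^{FV}$ need not be absolutely continuous in $t$ under the hypotheses of part~1 (one has $dY^{FV}=-\Phi^{-1}\,d[\Phi,Y]$ from $\Phi Y=\Psi$, but there is no assumption on $[\Phi,Y]$ there), so the singular-local-time contradiction does not close.

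The paper avoids this by never isolating $W(Y)$. It applies the product rule and It\^o--Tanaka directly to $\Phi_tW(Y_t)=e^{-\alpha t}V(X_t)$, observing that $N_t:=\int_0^t\Phi_s\,dY_s+[\Phi,Y]_t$ is itself a local martingale (it is $\Psi-\int Y\,d\Phi$). This makes $\int W(Y)\,d\Phi+\int W'_+(Y)\,dN$ the entire local-martingale part, so the full finite-variation part of $e^{-\alpha t}V(X_t)$ is the $\Phi$-weighted local-time integral, which is then compared directly with $\int_0^t e^{-\alpha s}(\mathcal L-\alpha)V(X_s)\,ds$. From there the argument of \cref{tpasting} does apply verbatim. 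Your change-of-measure alternative would also repair the gap (under $\mathbb Q$ with density $\Phi$, $Y$ \emph{is} a local martingale and your displayed identity holds), but if you go the integration-by-parts route it should be executed on $\Phi W(Y)$ rather than used as a preliminary step before reverting to $W(Y)$ alone.
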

\begin{proof}
First note that $Y$ is not necessarily a local martingale, while  $\Phi Y$ is. Indeed, $\Phi Y=\Psi$. Hence
\begin{equation*}
(N_t)_{0\leq t\leq T}:=\Big(\int^t_0\Phi_tdY_t+[\Phi,Y]_t\Big)_{0\leq t\leq T}
\end{equation*}
is the difference of two local martingales, and thus is a local martingale itself. Using the generalised It\^{o} formula for concave/convex functions, we have 
\begin{equation}\label{eqWconc}
\Phi_tW(Y_t)=\Phi_0W(y)+\int^t_0W(Y_s)d\Phi_s+\int^t_0W^{'}_{+}(Y_s)dN_s-\int^{\tilde{s}(r)}_{\tilde{s}(l)}\Phi_t\hat{L}_t^z\nu(dz),
\end{equation}
for $y\in[\tilde{s}(l),\tilde{s}(r)]$, $0\leq t\leq T$, where $\hat{L}^z_t$ is the local time of $Y_t$ at $z$, and $\nu$ is a non-negative $\sigma$-finite measure corresponding to the derivative $W^{''}$ in the sense of distributions.

On the other hand, if $g\in\mathbb{D}(\mathcal{L})$, then $V\in\mathbb{D}(\mathcal{L})$. Therefore,
\begin{equation}\label{eqVkill}
e^{-\alpha t}V(X_t)=V(x)+\int_0^te^{-\alpha s}dM^V_s+\int^t_0e^{-\alpha s}\{\mathcal{L}-\alpha\}V(X_s)ds,\quad\textit{ }0\leq t\leq T.
\end{equation}
Then, similarly to before, from the uniqueness of the decomposition of the Snell envelope, we have that the martingale and $FV$ terms in \eqref{eqWconc} and \eqref{eqVkill} coincide. Hence, for $t\in[0,T]$,
\begin{equation*}
\int^{\tilde{s}(r)}_{\tilde{s}(l)}e^{-\alpha t}\phi(X_t)\hat{L}_t^z\nu(dz)=-\int^t_0e^{-\alpha s}\{\mathcal{L}-\alpha\}V(X_s)ds\quad\textrm{a.s.}
\end{equation*}
Using the same arguments as in the proof of \cref{tpasting} we can show that both statements of this theorem hold. The details are left to the reader.
\end{proof}

\section*{Acknowledgments}
We are grateful to two anonymous referees and Prof. Goran Peskir for useful comments and suggestions.

\bibliographystyle{siamplain}

\appendix
\section{\color{white}}\label{appx}

\begin{lemma}\label{lem:directed}
For each $0\leq t\leq T$, the family of random variables $\{\mathbb{E}[G_\tau\lvert\mathcal{F}_t]:\tau\in\mathcal{T}_{t,T}\}$ is directed upwards, i.e. for any $\sigma_1$, $\sigma_2\in\mathcal{T}_{t,T}$, there exists $\sigma_3\in\mathcal{T}_{t,T}$, such that
\begin{equation*}
\mathbb{E}[G_{\sigma_1}\lvert\mathcal{F}_t]\vee \mathbb{E}[G_{\sigma_1}\lvert\mathcal{F}_t]\leq \mathbb{E}[G_{\sigma_3}\lvert\mathcal{F}_t],\textrm{ a.s.}
\end{equation*}
\end{lemma}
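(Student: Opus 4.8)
The plan is to prove the \emph{stronger} statement that the supremum is actually attained: I will construct a single $\sigma_3\in\mathcal{T}_{t,T}$ for which $\mathbb{E}[G_{\sigma_3}\lvert\mathcal{F}_t]=\mathbb{E}[G_{\sigma_1}\lvert\mathcal{F}_t]\vee\mathbb{E}[G_{\sigma_2}\lvert\mathcal{F}_t]$ almost surely, which immediately yields the required inequality. The construction is the familiar one of \textit{pasting} the two stopping times together on a partition of $\Omega$ dictated by which of the two conditional expectations dominates. The key structural fact making this work is that the comparison set lies in $\mathcal{F}_t$, so that we can switch between $\sigma_1$ and $\sigma_2$ on it without destroying either the stopping-time property or the ability to pull indicators out of the conditional expectation.

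Concretely, I would set
\[
A:=\{\mathbb{E}[G_{\sigma_1}\lvert\mathcal{F}_t]\geq\mathbb{E}[G_{\sigma_2}\lvert\mathcal{F}_t]\},
\]
and note that, since both conditional expectations are $\mathcal{F}_t$-measurable, $A\in\mathcal{F}_t$. Then define $\sigma_3:=\sigma_1\mathbbm{1}_A+\sigma_2\mathbbm{1}_{A^c}$. First I would check that $\sigma_3\in\mathcal{T}_{t,T}$: since $\sigma_1,\sigma_2$ take values in $[t,T]$ so does $\sigma_3$, and for any $s$ one has $\{\sigma_3\leq s\}=(\{\sigma_1\leq s\}\cap A)\cup(\{\sigma_2\leq s\}\cap A^c)$. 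For $s<t$ this event is empty, while for $s\geq t$ we have $A\in\mathcal{F}_t\subseteq\mathcal{F}_s$ together with $\{\sigma_i\leq s\}\in\mathcal{F}_s$, so $\{\sigma_3\leq s\}\in\mathcal{F}_s$ and $\sigma_3$ is an $\mathbb{F}$-stopping time lying in $\mathcal{T}_{t,T}$.

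The remaining step is the conditional-expectation computation. On $A$ we have $G_{\sigma_3}=G_{\sigma_1}$ and on $A^c$ we have $G_{\sigma_3}=G_{\sigma_2}$, so $G_{\sigma_3}=G_{\sigma_1}\mathbbm{1}_A+G_{\sigma_2}\mathbbm{1}_{A^c}$. Because $\mathbbm{1}_A$ and $\mathbbm{1}_{A^c}$ are $\mathcal{F}_t$-measurable, I can take them outside the conditional expectation to get
\[
\mathbb{E}[G_{\sigma_3}\lvert\mathcal{F}_t]=\mathbbm{1}_A\,\mathbb{E}[G_{\sigma_1}\lvert\mathcal{F}_t]+\mathbbm{1}_{A^c}\,\mathbb{E}[G_{\sigma_2}\lvert\mathcal{F}_t].
\]
By the very definition of $A$, on $A$ the first term equals $\mathbb{E}[G_{\sigma_1}\lvert\mathcal{F}_t]\vee\mathbb{E}[G_{\sigma_2}\lvert\mathcal{F}_t]$ and on $A^c$ the second term does, whence $\mathbb{E}[G_{\sigma_3}\lvert\mathcal{F}_t]=\mathbb{E}[G_{\sigma_1}\lvert\mathcal{F}_t]\vee\mathbb{E}[G_{\sigma_2}\lvert\mathcal{F}_t]$ almost surely, which is in particular $\geq\mathbb{E}[G_{\sigma_1}\lvert\mathcal{F}_t]\vee\mathbb{E}[G_{\sigma_2}\lvert\mathcal{F}_t]$ as required.

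There is no serious obstacle here; the only point demanding care is the verification that $\sigma_3$ is genuinely a stopping time, and this hinges entirely on $A\in\mathcal{F}_t$ (rather than merely $A\in\mathcal{F}$). The same membership $A\in\mathcal{F}_t$ is what licenses extracting $\mathbbm{1}_A$ from the conditional expectation, so both halves of the argument rest on this single observation. The integrability needed for the conditional expectations to be well defined is guaranteed by $G\in\bar{\mathbb{G}}$ via \eqref{sup}.
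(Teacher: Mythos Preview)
Your proof is correct and follows exactly the same approach as the paper: define $A=\{\mathbb{E}[G_{\sigma_1}\mid\mathcal{F}_t]\geq\mathbb{E}[G_{\sigma_2}\mid\mathcal{F}_t]\}\in\mathcal{F}_t$, set $\sigma_3=\sigma_1\mathbbm{1}_A+\sigma_2\mathbbm{1}_{A^c}$, and use $\mathcal{F}_t$-measurability of $A$ to pull the indicators out of the conditional expectation. If anything, your version is more detailed, since you spell out the verification that $\sigma_3$ is a stopping time, which the paper simply asserts.
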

\begin{proof}
Fix $t\in[0,T]$. Suppose $\sigma_1$, $\sigma_2\in\mathcal{T}_{t,T}$ and define $A:=\{\mathbb{E}[G_{\sigma_1}\lvert\mathcal{F}_t]\geq\mathbb{E}[G_{\sigma_2}\lvert\mathcal{F}_t]\}$. Let $\sigma_3:=\sigma_1\mathbbm{1}_{A}+\sigma_2\mathbbm{1}_{A^c}$. Note that $\sigma_3\in\mathcal{T}_{t,T}$. Using $\mathcal{F}_t$-measurability of $A$, we have
\begin{align*}
\mathbb{E}[G_{\sigma_3}\lvert\mathcal{F}_t]&=\mathbbm{1}_{A}\mathbb{E}[G_{\sigma_1}\lvert\mathcal{F}_t]+\mathbbm{1}_{A^c}\mathbb{E}[G_{\sigma_2}\lvert\mathcal{F}_t]\nonumber\\
&=\mathbb{E}[G_{\sigma_1}\lvert\mathcal{F}_t]\vee\mathbb{E}[G_{\sigma_2}\lvert\mathcal{F}_t]\text{ a.s.},
\end{align*}
which proves the claim.
\end{proof}
\begin{lemma}\label{lreg1}
Let $G\in\bar{\mathbb{G}}$ and $S$ be its Snell envelope with decomposition $S=M^*-A$. For $0\leq t \leq T$ and $\epsilon>0$, define
\begin{equation}\label{eq1}
K^\epsilon_t=\inf\{s\geq t: G_s\geq S_s-\epsilon\}.
\end{equation}
Then $A_{K^\epsilon_t}=A_t$ a.s. and the processes $(A_{K^\epsilon_t})$ and $A$ are indistinguishable.
\end{lemma}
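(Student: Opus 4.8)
The plan is to reduce the identity $A_{K^\epsilon_t}=A_t$ to the classical fact that the Snell envelope is a martingale up to the first contact time, and then to upgrade the resulting pointwise statement to indistinguishability by a monotonicity argument. First I would use \cref{tsnell}: since $S$ dominates $G$ we have $S_s-G_s\ge0$ for every $s$, so that
\[
K^\epsilon_t=\inf\{s\ge t:\,S_s-G_s\le\epsilon\}.
\]
Writing $\tau_t:=\inf\{s\ge t:\,S_s=G_s\}=\inf\{s\ge t:\,S_s-G_s=0\}$ for the first contact time, the inclusion $\{S-G=0\}\subseteq\{S-G\le\epsilon\}$ gives $t\le K^\epsilon_t\le\tau_t$ almost surely. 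As $A$ is non-decreasing (\cref{DB}), the claimed identity will then follow by sandwiching, $A_t\le A_{K^\epsilon_t}\le A_{\tau_t}$, once I have shown that $A$ is constant on $[t,\tau_t]$, i.e. $A_{\tau_t}=A_t$.

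The heart of the proof is therefore establishing $A_{\tau_t}=A_t$. Here I would invoke the standard optimal stopping result (El Karoui \cite{el1981lesaspects}, Karatzas and Shreve \cite{KS1998methods}) that, for a class (D) gains process, $\tau_t$ is the minimal optimal stopping time after $t$ and the stopped process $(S_{s\wedge\tau_t})_{s\ge t}$ is a uniformly integrable martingale; the class (D) property is supplied by \cref{lcondition}. Stopping the decomposition $S=M^*-A$ of \cref{DB} at $\tau_t$, the process $(A_{s\wedge\tau_t})_{s\ge t}=(M^*_{s\wedge\tau_t})-(S_{s\wedge\tau_t})$ is a difference of uniformly integrable martingales, hence itself a martingale. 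Since $A$ is non-decreasing, $(A_{s\wedge\tau_t}-A_t)_{s\ge t}$ is a non-negative martingale started at zero, and a non-negative martingale with zero mean vanishes identically; thus $A_{\tau_t}=A_t$, and the sandwiching above gives $A_{K^\epsilon_t}=A_t$ a.s. for each fixed $t$.

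To pass to indistinguishability I would exploit that $t\mapsto K^\epsilon_t$ is non-decreasing. Taking a single null set off which $A_{K^\epsilon_q}=A_q$ holds simultaneously for every rational $q\in[0,T]$, fix an arbitrary $t$ and choose rationals $q_n\downarrow t$. Monotonicity gives $t\le K^\epsilon_t\le K^\epsilon_{q_n}$, whence $A_t\le A_{K^\epsilon_t}\le A_{K^\epsilon_{q_n}}=A_{q_n}$; letting $n\to\infty$ and using the right-continuity of $A$ forces $A_{q_n}\to A_t$, so $A_{K^\epsilon_t}=A_t$ for every $t$ on this full-measure event, which is exactly indistinguishability. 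The main obstacle I anticipate is the careful justification of the martingale property of the Snell envelope stopped at the first contact time — in particular that contact is genuinely attained, $S_{\tau_t}=G_{\tau_t}$, so that $\tau_t$ is indeed optimal — since everything downstream rests on this; the sandwiching and the rational-approximation upgrade are then routine.
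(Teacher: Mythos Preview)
Your argument is correct and takes a different route from the paper's. The paper does not invoke the first contact time $\tau_t$ at all; instead it works directly with a maximising sequence $(\tau_n)_{n\in\mathbb{N}}\subset\mathcal{T}_{t,T}$ with $\mathbb{E}[G_{\tau_n}]\uparrow\mathbb{E}[S_t]$ (supplied by the directed-upwards property, \cref{lem:directed}). From $\mathbb{E}[G_{\tau_n}]\le\mathbb{E}[S_{\tau_n}]=\mathbb{E}[S_t]-\mathbb{E}[A_{\tau_n}-A_t]$ one reads off that both $S_{\tau_n}-G_{\tau_n}\to0$ and $A_{\tau_n}-A_t\to0$ in $L^1$, hence a.s.\ along a subsequence; the first limit forces $K^\epsilon_t\le\tau_n$ eventually (pathwise, since then $G_{\tau_n}>S_{\tau_n}-\epsilon$), and the second traps $A_{K^\epsilon_t}-A_t$ at zero. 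Your sandwiching via $\tau_t$ and the cited martingale property of the stopped Snell envelope (e.g.\ \cite{KS1998methods}, Appendix~D) is conceptually cleaner, and the obstacle you flag is not in fact an issue here: right-continuity of $S-G$ guarantees $S_{\tau_t}=G_{\tau_t}$ whenever $\tau_t<\infty$, and at $T$ one has $S_T=G_T$ by definition. The trade-off is that your route imports precisely the kind of optimal-stopping fact that \cref{lreg1} is meant to feed into (it is used to establish regularity of $S$ in \cref{lreg2}), whereas the paper's maximising-sequence argument is self-contained and in effect reproves the needed martingale property by hand. Your indistinguishability argument via monotonicity of $t\mapsto K^\epsilon_t$ and right-continuity of $A$ is more explicit than the paper's one-line appeal to right-continuity of $G$ and $S$.
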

\begin{proof}
From the directed upwards property (\cref{lem:directed}) we know that $\mathbb{E}[S_t]=\sup_{\tau\in\mathcal{T}_{t,T}}\mathbb{E}[G_\tau]$. Then for a sequence $(\tau_n)_{n\in\mathbb{N}}$ of stopping times in $\mathcal{T}_{t,T}$, such that $\lim_{n\to\infty}\mathbb{E}[G_{\tau_n}]=\mathbb{E}[S_t]$, we have
\begin{align*}
\mathbb{E}[G_{\tau_n}]\leq\mathbb{E}[S_{\tau_n}]=\mathbb{E}[M^*_{\tau_n}-A_{\tau_n}]=\mathbb{E}[S_{t}]-\mathbb{E}[A_{\tau_n}-A_{t}],
\end{align*}
since $M^*$ is uniformly integrable. Hence, since $A$ is non-decreasing,
\begin{equation*}
0\leq\lim_{n\to\infty}\mathbb{E}[S_{\tau_n}-G_{\tau_n}]=-\lim_{n\to\infty}\mathbb{E}[A_{\tau_n}-A_{t}]\leq0,
\end{equation*} 
and thus we have equalities throughout. By passing to a sub-sequence we can assume that
\begin{equation}\label{eq2}
\lim_{n\to\infty}(S_{\tau_n}-G_{\tau_n})=0=\lim_{n\to\infty}(A_{\tau_n}-A_{t})\quad \text{a.s.}
\end{equation}
The first equality in \eqref{eq2} implies that $K^\epsilon_t\leq\tau_{n_0}$ a.s., for some large enough $n_0\in\mathbb{N}$, and thus $A_{K^\epsilon_t}\leq A_{\tau_n}$, for all $n_0\leq n$. Since $A$ is non-decreasing, we also have that $0\leq A_{K^\epsilon_t}-A_t\leq A_{\tau_n}-A_t$ a.s., $n_0\leq n$, and from the second equality in \eqref{eq2} we conclude that $A_{K^\epsilon_t}= A_{t}$ a.s. The indistinguishability follows from the right-continuity of $G$ and $S$.
\end{proof}
\subsection{Proofs of results in \cref{prelim}}\label{proofs2}

\begin{proof}[Proof of \cref{Fell}]
The completed filtration generated by a Feller process satisfies the \textit{usual assumptions}, in particular, it is both right-continuous and quasi-left-continuous. The latter means that for any predictable stopping time $\sigma$, $\mathcal{F}_{\sigma-}=\mathcal{F}_{\sigma}$.  Moreover, every c\`{a}dl\`{a}g Feller process is left-continuous over stopping times and satisfies the strong Markov property. On the other hand, every Feller process admits a c\`{a}dl\`{a}g modification (these are standard results and can be found, for example, in Revuz and Yor \cite{revuz2013continuous} or Rogers and Williams \cite{rogers1987markov}). All that remains is to show that the addition of the functional $F$ leaves $(X,F)$ strong Markov. This is elementary from \eqref{func}.
\end{proof}

\subsection{Proofs of results in \cref{main}}\label{proofs3}

\begin{proof}[Proof of \cref{lreg2}]
Let $(\tau_n)_{n\in\mathbb{N}}$ be a nondecreasing sequence of stopping times with $\lim_{n\to\infty}\tau_n=\tau$, for some fixed $\tau\in\mathcal{T}_{0,T}$. Since $S$ is a supermartingale, $\mathbb{E}[S_{\tau_n}]\geq\mathbb{E}[S_\tau]$, for every $n\in\mathbb{N}$. For a fixed $\epsilon>0$, $K^\epsilon_{\tau_n}$ (defined by \cref{eq1}) is a stopping time, and by \cref{lreg1}, $A_{K^\epsilon_{\tau_n}}=A_{\tau_n}$ a.s. Therefore, since $M^*$ is uniformly integrable,
\begin{equation*}
\mathbb{E}[S_{K^\epsilon_{\tau_n}}]=\mathbb{E}[M^*_{K^\epsilon_{\tau_n}}-A_{K^\epsilon_{\tau_n}}]=\mathbb{E}[M^*_{\tau_n}-A_{{\tau_n}}]=\mathbb{E}[S_{\tau_n}].
\end{equation*}
Thus, by the definition of $K^\epsilon_{\tau_n}$,
\begin{equation*}
\mathbb{E}[G_{K^\epsilon_{\tau_n}}]\geq\mathbb{E}[S_{K^\epsilon_{\tau_n}}]-\epsilon=\mathbb{E}[S_{\tau_n}]-\epsilon.
\end{equation*}
Let $\hat{\tau}:=\lim_{n\to\infty}K^\epsilon_{\tau_n}$. Note that the sequence $(K^\epsilon_{\tau_n})_{n\in\mathbb{N}}$ is non-decreasing and dominated by $K^\epsilon_{\tau}$. Hence $\tau\leq\hat{\tau}\leq K^\epsilon_{\tau}$. Finally, using the regularity of $G$ we obtain
\begin{equation*}
\mathbb{E}[S_\tau]\geq\mathbb{E}[S_{\hat{\tau}}]\geq\mathbb{E}[G_{\hat{\tau}}]=\lim_{n\to\infty}\mathbb{E}[G_{K^\epsilon_{\tau_n}}]\geq\lim_{n\to\infty}\mathbb{E}[S_{\tau_n}]-\epsilon.
\end{equation*}
Since $\epsilon$ is arbitrary, the result follows.
\end{proof}

\begin{proof}[Proof of \cref{lhloc}]
For $n\geq1$, define
\begin{equation*}
\tau_n:=\inf\{t\geq0:\int^t_0\lvert dK_s\lvert\geq n\}. 
\end{equation*}
Clearly $\tau_n\uparrow\infty$ as $n\to\infty$. Then for each $n\geq1$
\begin{align*}
\mathbb{E}[\int^{t\wedge\tau_n}_0\lvert dK_s\lvert]&\leq\mathbb{E}[\int^{\tau_n}_0\lvert dK_s\lvert]\nonumber\\
&=\mathbb{E}[\int^{\tau_n-}_0\lvert dK_s\lvert]+\lvert\Delta K_{\tau_n}\lvert]\nonumber\\
&\leq n+c.
\end{align*}
Therefore, since $X\in\mathbb{G}$,
\begin{equation*}
\lvert\lvert L^{\tau_n}\lvert\lvert_{\mathcal{S}^1}\leq\lvert\lvert X^{\tau_n}\lvert\lvert_{\mathcal{S}^1}+\mathbb{E}[\int^{\tau_n}_0\lvert dK_s\lvert]<\infty,
\end{equation*}
and thus, $\lvert\lvert X^{\tau_n}\lvert\lvert_{\mathcal{H}^1}<\infty$, for all $n\geq1$.
\end{proof}

\end{document}